\newtheorem{theorem}{\bf Theorem}[section]
\newtheorem{remark}{\bf Remark}[section]
\newtheorem{lemma}{\bf Lemma}[section]
\newcommand{\eps}{\varepsilon}
\newcommand{\cg}{\mathcal{C}_g}
\newcommand{\tr}{\mathrm{tr}\,}
\newcommand{\dvr}{\mathrm{div}}
\newcommand{\e}{\varepsilon}
\newcommand{\R}{\mathbb R}
\newcommand{\M}{\mathcal M}
\newcommand{\rr}{{\bf r}}
\newcommand{\T}{{\bf T}}
\newcommand{\N}{{\bf N}}
\newcommand{\p}{{\bf p}}
\newcommand{\beq}{\begin{equation}}
\newcommand{\eeq}{\end{equation}}
\newcommand{\abs}[1]{\left\vert{#1}\right\vert}
\title{Dimension reduction for the Landau-de Gennes model on curved nematic thin films}
\author{Dmitry Golovaty\\
\small Department of Mathematics\\
\small The University of Akron\\
\small Akron, OH 4325-4002, USA\\
\small \texttt{dmitry@uakron.edu}\\
\and
Jos\'e Alberto Montero\\
\small Departamento de Matem\'aticas\\
\small Facultad de Matem\'aticas\\
\small Pontificia Universidad Cat\'olica de Chile\\
\small Vicu\~na Mackenna 4860\\
\small San Joaqu\'in, Santiago, Chile\\
\small \texttt{amontero@mat.puc.cl}
\and
Peter Sternberg\\
\small Department of Mathematics\\
\small Indiana University\\
\small Bloomington, IN 47405\\
\small \texttt{sternber@indiana.edu}
}
\date{}
\begin{document}
\maketitle

\begin{abstract}
We use the method of $\Gamma$-convergence to study the behavior of the Landau-de Gennes model for a nematic liquid crystalline film attached to a general fixed surface in the limit of vanishing thickness. This paper generalizes the approach in \cite{GMS} where we considered a similar problem for a planar surface. Since the anchoring energy dominates when the thickness of the film is small, it is essential to understand its influence on the structure of the minimizers of the limiting energy. In particular, the anchoring energy dictates the class of admissible competitors and the structure of the limiting problem. We assume general weak anchoring conditions on the top and the bottom surfaces of the film and strong Dirichlet boundary conditions on the lateral boundary of the film when the surface is not closed. We establish a general convergence result to an energy defined on the surface that involves a somewhat surprising remnant of the normal component of the tensor gradient. Then we exhibit one effect of curvature through an analysis of the behavior of minimizers to the limiting problem when the substrate is a frustum.
\end{abstract}

\section{Introduction}
In this paper we expand our analysis of thin nematic liquid crystalline films, initiated in \cite{GMS} for planar films, to include the setting of general smooth surfaces. The focus of the present work is on rigorous dimensional reduction of the Landau-de Gennes $Q$-tensor model to its surface analog, in particular, to justify asymptotic arguments in \cite{Napoli_Vergori} (see also \cite{virga_talk}).  The Landau-de Gennes theory is based on the $Q$-tensor order parameter field that is related to the second moment of the local orientational probability distribution. The relevant variational model involves minimization of an energy functional consisting of elastic, bulk and weak anchoring surface contributions. The significance of weak anchoring energy terms within both $Q$-tensor and director theories has been highlighted in numerous recent contributions, including for example, \cite{apala_zarnescu_01,canevari13,Lamy14,GM,segatti14,ball2010nematic}.

Having already established in \cite{GMS} the dimension reduction for a planar film, we now wish to explore the possible influence of curvature on the limiting energy in the thin film limit. To achieve this goal we use the theory of $\Gamma$-convergence that has proved successful in tackling problems of dimension reduction in other settings, such as elasticity \cite{anzellotti1994dimension} and Ginzburg-Landau theory \cite{contreras2010gamma}.  

In Section \ref{s:model}  we define the full three-dimensional energy, perform non-dimensionalization, and review some elementary facts from calculus on surfaces. In Section \ref{s:conv} we prove $\Gamma$-convergence to a limiting energy $F_0$, cf. Theorem \ref{t1}. One feature of the $\Gamma$-limit derived in Section \ref{s:conv} is that it includes within its definition a minimum of a certain scalar function defined over the set of traceless symmetric tensors. This minimization arises as a sort of remnant of the normal component of the $Q$-tensor gradient. In Section \ref{s:fezero}, we carry out this minimization thereby obtaining an explicit formula for the $\Gamma$-limit. The formula demonstrates that the limiting energy density contains a number of previously unreported elastic ``strange" terms coupling the surface gradient of the $Q$-tensor to the normal to the surface. Next, as an example, in Section \ref{s:revolve} we compute the expression for the limiting energy in the geometry of a surface of revolution. Specializing further in Section \ref{s:cone}, we analyze the case of a frustum. We discover a dichotomy between the behavior of minimizers for broad and for narrow cones when the nematic coherence length is small. When the angle of the frustum is small, the director field tends to follow the generators of the frustum. However, the director deviates from such a path significantly when the angle broadens and eventually approaches a constant state. As the result, we observe that the degree of the director along the boundary components depends on the angle of the frustum.  

\section{Statement of the problem}
\label{s:model}
\subsection{The $Q$-tensor}
\label{s:qtens}
In the three-dimensional setting, one describes a nematic liquid crystal by a $2$-tensor $Q$ which takes the form of a $3\times  3$ symmetric, traceless matrix. Here $Q(x)$ models the second moment of the orientational distribution of the rod-like molecules near $x$. The tensor $Q$  has three real eigenvalues satisfying $\lambda_1+\lambda_2+\lambda_3=0$ and a mutually orthonormal eigenframe $\left\{\mathbf{l},\mathbf{m},\mathbf{n}\right\}$. We refer the reader to \cite{Mottram_Newton} for more details but below we summarize the key elements of this theory that we will utilize.

Suppose that $\lambda_1=\lambda_2=-\lambda_3/2.$ Then the liquid crystal is in a {\em uniaxial nematic} state and \begin{equation}Q=-\frac{\lambda_3}{2}\mathbf{l}\otimes\mathbf{l}-\frac{\lambda_3}{2}\mathbf{m}\otimes\mathbf{m}+
\lambda_3\mathbf{n}\otimes\mathbf{n}=S\left(\mathbf{n}\otimes\mathbf{n}-\frac{1}{3}\mathbf{I}\right),\label{uniaxial}
 \end{equation}
 where $S:=\frac{3\lambda_3}{2}$ is the uniaxial nematic order parameter and $\mathbf{n}\in\mathbb{S}^2$ is the nematic director and
\[\mathbf{l}\otimes\mathbf{l}+\mathbf{m}\otimes\mathbf{m}+\mathbf{n}\otimes\mathbf{n}=\mathbf{I}.\]
If there are no repeated eigenvalues, the liquid crystal is in a {\em biaxial nematic} state and
\begin{multline}
Q=\lambda_1\mathbf{l}\otimes\mathbf{l}+\lambda_3\mathbf{n}\otimes\mathbf{n}-\left(\lambda_1+\lambda_3\right)\left(\mathbf{I}-\mathbf{l}\otimes\mathbf{l}-\mathbf{n}\otimes\mathbf{n}\right)\\=S_1\left(\mathbf{l}\otimes\mathbf{l}-\frac{1}{3}\mathbf{I}\right)+S_2\left(\mathbf{n}\otimes\mathbf{n}-\frac{1}{3}\mathbf{I}\right),
\label{biaxial}\end{multline}
where $S_1:=2\lambda_1+\lambda_3$ and $S_2=\lambda_1+2\lambda_3$ are biaxial order parameters.
Note that uniaxiality can also be described in terms of $S_1$ and $S_2$, that is one of the following three cases occurs: $S_1=0$ but $S_2\not=0$, $S_2=0$ but $S_1\not =0$ or $S_1=S_2\not=0.$ When $S_1=S_2=0$ so that ${\bf Q}=0$ the nematic liquid crystal is said to be in an isotropic state associated, for instance, with a high
temperature regime.

From the modeling perspective it turns out that the eigenvalues of $Q$ must satisfy the constraints \cite{ball2010nematic,sonnet2012dissipative}:
\begin{equation}
\label{eq:bnds}
\lambda_i\in[-1/3,2/3],\ \mathrm{for}\ i=1,2,3.
\end{equation}

\subsection{Geometry of the Domain}
We will use $X$ to denote a point in $\R^3$. 
\begin{figure}[htb]
\centering
\includegraphics[height=2.5in]{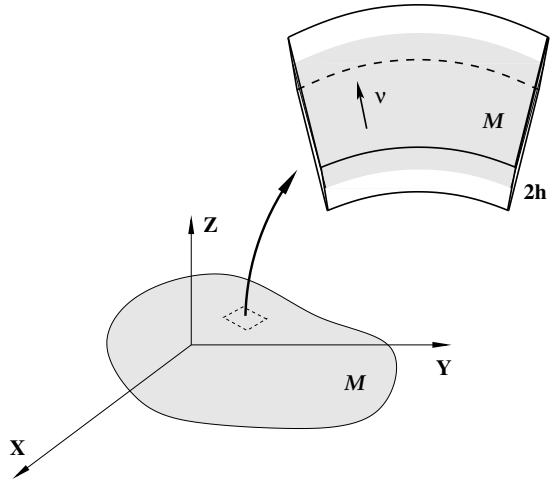}
\caption{Geometry of the problem.}
  \label{fig:1}
\end{figure}
We let $\mathcal{M}$ denote a bounded, two-dimensional, $C^2$ orientable 
manifold embedded in $\R^{3}$, either closed or with smooth boundary, and we let $x$ denote a point on
$\mathcal{M}$. Fixing an orientation, we write $\nu(x)$ for the unit normal and due to the $C^2$ smoothness of $\mathcal{M}$ we have that the mapping 
$x\mapsto\nu(x)$ is $C^1$ on the compact set $\mathcal{M}$. It follows from the inverse function theorem that for some sufficiently small positive number $h_0$, the map $(x,t)\mapsto x+h_0t\nu(x)$ is one-to-one on $\mathcal{M}\times (-1,1).$

With this observation in hand, for $0<h<h_0$ we shall assume the nematic film occupies a thin neighborhood of $\mathcal{M}$ given
by
\begin{displaymath}
\Omega _{h } := \{ X\in\R^3:\;X=x +h t \nu
(x)\;\mbox{for}\; x \in \mathcal{M} ,\; t \in (-1,1)  \}
\end{displaymath}
and we can unambiguously express each point $X\in \Omega_{h}$ 
in the form
\begin{equation}
\label{eq:contras}
X=x+h t\nu(x)
\end{equation}
for some unique pair $x\in\mathcal{M}$ and $t\in(-1,1)$.

 We will also set
\begin{equation}
\label{eq:surfj}
\mathcal{M}_{\pm h}:=\left\{x\pm h\nu(x):x\in\mathcal{M}\right\}.
\end{equation}

\subsection{Landau-de Gennes model}
We assume that the bulk elastic energy density of a nematic liquid crystal is given by
\begin{multline}
\label{elastic}
f_e(\nabla Q):=\frac{L_1}{2}{|\nabla Q|}^2+\frac{L_2}{2}Q_{ij,j}Q_{ik,k}+\frac{L_3}{2}Q_{ik,j}Q_{ij,k} \\
=\sum_{j=1}^3\left\{\frac{L_1}{2}{|\nabla Q_j|}^2+\frac{L_2}{2}\left(\dvr{Q_j}\right)^2+\frac{L_3}{2}\nabla Q_j\cdot \nabla Q_j^T\right\},
\end{multline}
and that the bulk Landau-de Gennes energy density is
\begin{equation}
\label{eq:LdG}
f_{LdG}(Q):=a\,\mathrm{tr}\left(Q^2\right)+\frac{2b}{3}\mathrm{tr}\left(Q^3\right)+\frac{c}{2}\left(\mathrm{tr}\left(Q^2\right)\right)^2,
\end{equation}
cf. \cite{Mottram_Newton}. Here $Q_j,\,j=1,2,3$ is the $j$-th column of the matrix $Q$ and $A\cdot B=\tr\left(B^TA\right)$ is the dot product of two matrices $A,B\in M^{3\times3}.$ Further, the coefficient $a$ is temperature-dependent and in particular is negative for sufficiently low temperatures, and $c>0$.
One readily checks that the form \eqref{eq:LdG} of this potential implies that in fact $f_{LdG}$ depends only on the eigenvalues of $Q$, and due to the trace-free condition, therefore
 depends only on two eigenvalues. Equivalently, one can view $f_{LdG}$ as a function of the two degrees of orientation $S_1$ and $S_2$ appearing in \eqref{biaxial}. Furthermore,
  its form guarantees that the isotropic state $Q\equiv 0$ (or equivalently $S_1=S_2=0$) yields a global minimum at high temperatures while a uniaxial state of the form \eqref{uniaxial} where either $S_1=0,\;S_2=0$ or $S_1=S_2$ gives the minimum when temperature (i.e. the parameter $a$) is reduced below a certain critical value, cf. \cite{apala_zarnescu_01,Mottram_Newton}. In this paper we fix the temperature to be low enough so that the minimizers of $f_{LdG}$ are uniaxial. We also remark for future use that $f_{LdG}$ is bounded from below and can be made nonnegative by adding an appropriate constant. In light of this, we will henceforth assume a minimum value of zero for $f_{LdG}.$

We now turn to the behavior of the nematic on the boundary of the sample. Here two alternatives are possible. First, the Dirichlet boundary conditions on $Q$ are referred to as strong anchoring conditions in the physics literature: they impose specific preferred orientations on nematic molecules on surfaces bounding the liquid crystal. In the sequel we impose these conditions on the lateral part of the film $\partial\M\times(-h,h)$ whenever $\M$ is not closed. An alternative is to specify the anchoring energy on the boundary of the sample; then orientations of the molecules on the boundary are determined as a part of the minimization procedure. We adopt this approach, referred to as {\em weak anchoring}, on the top and the bottom surfaces of the film. Following the discussion in Section 3 of \cite{GMS}, we assume that, up to an additive constant, the anchoring energy has the form
\begin{equation}
\label{fs}
f_s(Q,\nu)=\alpha\left[(Q\nu\cdot\nu)-\beta\right]^2+\gamma{\left|\left(\mathbf{I}-\nu\otimes\nu\right)Q\nu\right|}^2,
\end{equation}
for any $\nu\in\mathbb{S}^1$ and $Q\in\mathcal A$, where $\alpha, \gamma>0$, $\beta\in\mathbb{R}$, and
\begin{equation}
\label{eq:cala}
\mathcal A:=\left\{Q\in M^{3\times3}_{sym}:\mathrm{tr}\,{Q}=0\right\}.
\end{equation}
This form of the anchoring energy requires that a minimizer of $f_s$ has $\nu$ as an eigenvector with corresponding eigenvalue equal to $\beta$. From \eqref{eq:bnds} it follows that $\beta\in\left[-\frac{1}{3},\frac{2}{3}\right]$. An alternative approach would be to extend the anchoring energy by including quartic terms \cite{fournier2005modeling} and even surface derivative terms \cite{Longa}.

Putting the three energy densities $f_e$, $f_{LdG}$ and $f_s$ together, cf. \eqref{elastic}, \eqref{eq:LdG} and \eqref{fs}, we arrive at a Landau-de Gennes type model to be analyzed in this study, given by
\begin{equation}
\label{energy}
E_h[Q]:=\int_{\Omega_h}\left\{f_e(\nabla Q)+f_{LdG}(Q)\right\}\,dV+\int_{\mathcal{M}_{-h}\cup\mathcal{M}_h}f_s(Q,\nu)\,d\mathcal{H}^2(x).
\end{equation}
Here $d\mathcal{H}^2$ represents surface measure, i.e. two-dimensional Hausdorff measure.

Again, we will consider surfaces $\mathcal{M}$ that are either closed or have a smooth boundary. In the case when the boundary is nonempty, we set $\Omega^{\mathrm{lat}}_{h}:=\partial\Omega_h\backslash\{\mathcal{M}_{-h}\cup\mathcal{M}_h\}$ and for given uniaxial data $g\in H^{1/2}(\Omega^{\mathrm{lat}}_{h};\mathcal{A})$ we prescribe the lateral boundary condition of the form
\begin{equation}
  \label{eq:bd}
  Q(X)=g(x)\ \mathrm{for}\ X\in\Omega^{\mathrm{lat}}_{h}.
\end{equation}
Note that we assume that the boundary data $g$ does not vary in the direction normal to the surface $\mathcal{M}$. {Some additional conditions on $g$ will be imposed later on in the text, cf. \eqref{Hg}.}

The admissible class of tensor-valued functions is then $Q$ lying in the Sobolev space $H^1\left(\Omega_h;\mathcal{A}\right)$ with $Q|_{\Omega^{\mathrm{lat}}_{h}}=g$, where $\mathcal{A}$ is the set of three-by-three symmetric traceless matrices defined in \eqref{eq:cala}. Throughout this work we assume that $g$ is uniaxial and is taken so that this set of admissible tensors is nonempty.

\subsection{Non-dimensionalization}
\label{secnd}

We non-dimensionalize the problem by scaling the spatial coordinates
\[\tilde{X}=\frac{X}{D},\;\tilde{x}=\frac{x}{D}\ \]
where $D:=\mathrm{diam}(\mathcal{M})$.  Set $M_2=\frac{L_2}{L_1}$ and $M_3=\frac{L_3}{L_1}$ and introduce the small non-dimensional parameter  $\epsilon=\frac{h}{D}$ representing the aspect ratio between the film thickness and the diameter of the closed surface. Then we define the non-dimensionalized elastic energy density and Landau-de Gennes potential by setting
\beq
\tilde{f}_e(\nabla_{\tilde{X}} Q):=\frac{D^2}{L_1}f_e(\nabla_X Q)
=\frac{1}{2}\sum_{j=1}^3\left\{|\nabla_{\tilde{X}} Q_j|^2+M_2\left(\dvr_{\tilde{X}}{Q_j}\right)^2+M_3\nabla_{\tilde{X}} Q_j\cdot \nabla_{\tilde{X}} Q_j^T\right\}\label{newelastic}
\eeq
and
\begin{equation}
\tilde{f}_{LdG}(Q):=\delta^2\frac{D^2}{L_1}f_{LdG}(Q)=2A\,\mathrm{tr}\left(Q^2\right)+\frac{4}{3}B\,\mathrm{tr}\left(Q^3\right)+{\left(\mathrm{tr}\left(Q^2\right)\right)}^2,\label{newLdG}
\end{equation}
respectively. Here the parameters $A:=\frac{a}{c},$ $B:=\frac{b}{c},$ and $\delta:=\sqrt{\frac{2L_1}{cD^2}}$ are all non-dimensional. 

Finally, turning to the surface energy we let $\tilde\alpha:=\frac{\alpha D}{L_1},\ \tilde{\gamma}:=\frac{\gamma D}{L_1}$, and setting
\[\tilde{f}_s(Q,\nu):=\frac{D}{L_1}f_s(Q,\nu),\]
we obtain an expression for the non-dimensionalized surface energy of the form
\begin{equation}
\label{eq:baren}
\tilde{f}_s(Q,\nu)=\tilde{\alpha}\left[(Q\nu\cdot\nu)-\beta\right]^2+\tilde{\gamma}{\left|\left(\mathbf{I}-\nu\otimes\nu\right)Q\nu\right|}^2.
\end{equation}
Now for convenience we drop all of the tildes and conclude that the total dimensionless energy is
\begin{equation}
\label{eq:8}
E_\e[Q]:=\frac{1}{L_1D}E_h[Q]=\int_{\Omega_\e}\left(f_e(\nabla Q)+\frac{1}{\delta^2}f_{LdG}(Q)\right)\,dV+\int_{\mathcal{M}_{-\varepsilon}\cup\mathcal{M}_\varepsilon}f_s(Q,\nu)\,dA.
\end{equation}
Here the rescaled domain, denoted by $\Omega_\e$, is given by
\[
\Omega_\e:=\{ X\in\R^3:\;X=x +\e t \nu
(x)\;\mbox{for}\; x \in \mathcal{M} ,\; t \in (-1,1)  \}\quad\mbox{for}\;\e<\e_0:=\frac{h_0}{D}
\]
and
\[
\mathcal{M}_{\pm \e}:=\left\{x\pm \e\nu(x):x\in\mathcal{M}\right\},
\]
where $\M$ now denotes the rescaled surface of diameter one.

Lastly, we divide by $\e$, letting $F_\epsilon[Q]:=\frac{1}{\e}E_\e[Q]$, so as to obtain an energy that is $O(1)$ for small $\e$. Hence,
\begin{equation}
\label{nden}
F_\e[Q]:=\frac{1}{\e}\int_{\Omega_\e}\left(f_e(\nabla Q)+\frac{1}{\delta^2}f_{LdG}(Q)\right)\,dV+\frac{1}{\e}\int_{\mathcal{M}_{-\varepsilon}\cup\mathcal{M}_\varepsilon}f_s(Q,\nu)\,d\mathcal{H}^2(x),
\end{equation}
where $f_e$, $f_{LdG}$ and $f_s$ are given by \eqref{newelastic}, \eqref{newLdG} and \eqref{eq:baren}, and now $F_\e$ is defined over the set of $Q$-tensors 
\beq
\mathcal{C}_g^\e:=\left\{Q\in H^1\left(\Omega_\e;\mathcal{A}\right):Q|_{\Omega^{\mathrm{lat}}_{\e}}=g\right\}.\label{Ceg}\eeq

\subsection{$Q$-tensors on a fixed domain; Surface gradients and divergences}

With an eye towards eventually passing to the $\e\to 0$ limit via $\Gamma$-convergence, we now find it convenient to re-express the $Q$-tensors, their gradients and their divergences in terms of tensors defined on the fixed domain $\M\times (-1,1)$ rather than $\Omega_\e$.  To this end, we first recall some basic identities for the surface gradient and surface divergence, for which a good reference is \cite{Simon}, Chapter 2. For any scalar-valued function $f$ defined on $\Omega_{\e}$  we
henceforth associate to it a function $\hat{f}=\hat f(x,t)$ defined on $\M\times (-1,1)$ via the formula
\beq
\hat{f}(x,t):=f\big(x +\e t \nu(x)\big).\label{fhat}
\eeq
Then, for points $X\in \Omega_\e$ and $x\in\M$ related via $X=x+t\e\nu(x)$ we readily compute that
\beq
\hat{f}_t(x,t)=\e\nabla_X f\cdot\nu(x)\label{ft}
\eeq
and for $\tau=\tau(x)$ any unit tangent vector to $\M$ at $x$ we can calculate the directional derivative as
\[
\partial_{\tau}\hat{f}=\nabla _Xf\cdot \big(\tau+\e t \partial_{\tau}\nu\big).\]
 Hence, denoting by $\{\tau_1,\tau_2\}$ an orthonormal basis for the local tangent plane to $\M$
 at $x$ and invoking \eqref{ft} and the summation convention on repeated indices we find that the surface gradient
$\nabla_{\M}\hat{f}$ is given by
\begin{eqnarray}
\label{eq:sugr}
\nabla_{\M}\hat{f}:=\partial_{\tau_j}\hat{f}\,\tau_j&&=
\big(\nabla_Xf\cdot\tau_j\big)\tau_j+\e t\big(\nabla_Xf\partial_{\tau_j}\nu\big)\tau_j\nonumber\\
&& =\nabla_Xf-\big(\nabla_Xf\cdot\nu\big)\nu+\e t\nabla_Xf \big(\partial_{\tau_j}\nu\,\tau_j\big)\nonumber\\
&&=\nabla_Xf-\frac{1}{\e}\hat{f}_t\nu+\e t\nabla_X f\nabla_{\M}\nu,\label{gradM}
\end{eqnarray}
where we recognize $\nabla_{\mathcal M}\nu$ as the shape operator.
Consequently, if we introduce the matrix-valued mapping $\Phi=\Phi(x,t;\e)$ via the formula
\beq
\Phi(x,t;\e):=\left(I+\e t\nabla_{\M}\nu(x)\right)^{-1},\label{Phidefn}
\eeq
then the previous calculation yields
\beq
\nabla_Xf=\left(\nabla_{\M}\hat{f}+\frac{1}{\e}\hat{f}_t\,\nu\right)\Phi.\label{gradXf}
\eeq
In the case where $f$ is vector-valued, the identities \eqref{gradM} and \eqref{gradXf} still hold
but with the vector quantity $\hat{f}_t\nu$ replaced by the matrix $\hat{f}_t\otimes \nu.$ Thus, in particular for $Q_j=$ 
the $j^{th}$ column of a $Q$-tensor, we find
\beq
\nabla_X Q_j=\left(\nabla_{\M}\hat{Q}_j+\frac{1}{\e}\hat{Q_j}_t\otimes\nu\right)\Phi.\label{gradXQj}
\eeq
Further, if we expand $\Phi$ in $\e$ as
\beq
\Phi(x,t;\e)\sim I-\e t\nabla_{\M}\nu(x)+O(\e^2),\label{Phiexp}
\eeq
and we use the properties $\nu\cdot\nabla_\M\nu=0=\nabla_\M\nu\cdot \nu$ resulting from the condition $\abs{\nu}=1$ one sees that
\beq
\nabla_XQ_j\sim \frac{1}{\e}\hat{Q_j}_t\otimes\nu+\nabla_\M\hat{Q}_j-\e t\nabla_\M \hat{Q_j}\nabla_\M\nu+O(\e^2).\label{gradexp}
\eeq

We then obtain a corresponding formula for the divergence of $Q_j$ in terms of $x$ and $t$ derivatives:
\beq
\dvr_X{Q_j}=\nabla_X Q_{ij}\cdot e_i=\left(\nabla_{\M}\hat{Q}_{ij}+\frac{1}{\e}(\hat{Q}_{ij})_t\nu\right)\Phi\cdot e_i,\label{newdiv}
\eeq
{where $\left\{e_i\right\}_{i=1}^3$ is an orthonormal basis in $\mathbb R^3$.} Defining the surface divergence of a vector field $F$ by $\dvr_\M F:=\nabla_\M F^{(i)}\cdot e_i$, we again expand $\Phi$ using \eqref{Phiexp}
to find
\begin{eqnarray}
\dvr_X{Q_j}&&\sim \frac{1}{\e}({\hat{Q}_{ij}})_t\nu\cdot e_i+\nabla_\M \hat{Q}_{ij}\cdot e_i-\e t\nabla_\M\hat{Q}_{ij}\nabla_\M\nu\,e_i+O(\e^2)\nonumber\\
&&= \frac{1}{\e}(\hat{Q}_{ij})_t\nu^i+\dvr_\M \hat{Q}_j-\e t\nabla_\M\hat{Q}_{ij}\nabla_\M\nu\,e_i+O(\e^2).\label{divexp}
\end{eqnarray}

\section{$\Gamma$-convergence to a surface energy defined on $\M$}
\label{s:conv}

In this section we pass to the limit $\e\to 0$ in the energy $F_\e$ given by \eqref{nden}. For convenience, we assume that an appropriate constant has been added to the Landau-de Gennes energy to guarantee that $F_\epsilon[Q]\geq 0$. {Here we are assuming that the elastic constants satisfy the conditions stated in Theorem \ref{t1} that ensure the coercivity of $F_\e$.} We wish to consider a range of asymptotic regimes corresponding to different magnitudes of $\alpha$ and $\gamma$ in the surface energy density given by \eqref{eq:baren}. To this end, we will assume that $\alpha=\alpha_0+\eps\alpha_1$ and $\gamma=\gamma_0+\eps\gamma_1$ for some nonnegative constants $\alpha_0, \alpha_1,\gamma_0,\gamma_1$. Then \eqref{eq:baren} can be written as
\begin{equation}
  \label{eq:se}
  f_s(Q,\nu)=f_s^{(0)}(Q,\nu)+\eps f_s^{(1)}(Q,\nu),
\end{equation}
where
\begin{equation}
\label{eq:fso}
f_s^{(0)}:=\alpha_0\left[(Q\nu\cdot\nu)-\beta\right]^2+\gamma_0{\left|\left(\mathbf{I}-\nu\otimes\nu\right)Q\nu\right|}^2,
\end{equation}
and
\begin{equation}
\label{eq:fsi}
f_s^{(1)}:=\alpha_1\left[(Q\nu\cdot\nu)-\beta\right]^2+\gamma_1{\left|\left(\mathbf{I}-\nu\otimes\nu\right)Q\nu\right|}^2.
\end{equation}
{Here, we can assume that $\alpha_0\alpha_1=\gamma_0\gamma_1=0$. Indeed, as will become evident later on, $f_s^{(0)}$ asymptotically vanishes at leading order in the thin film limit. Thus, if for instance, $\alpha_0\neq0$, the first term in $f_s^{(1)}$ is not present and we may take $\alpha_1=0$. 

Note that we would like to capture the asymptotic behavior of $F_\e$ for the range of parameter values, even when some of the material constants have magnitudes comparable to thickness. Mathematically, it then appears that these constants vary with thickness, even though this is not the case physically.}

Next we define the spaces 
\beq
\mathcal{C}_g:=\left\{Q\in H^1(\M\times (-1,1);\mathcal{A}):Q|_{\partial\M\times(-1,1)}=g\right\}\label{Cg}
\eeq
and
\beq
H_g:=\left\{Q\in \mathcal{C}_g:\; Q_t\equiv 0\;\mbox{a.e}.,\;f_s^{(0)}(Q(x),\nu(x))=0\mbox{ for a.e. }x\in\M\right\}
\label{Hg}
\eeq
for some uniaxial boundary data $g\in H^{1/2}\left(\partial\M;\mathcal{A}\right)$ such that the set $H_g$ is nonempty. In the case where $\partial\M=\emptyset$, this boundary condition is not present in these two definitions. 

Now we are ready to define our candidate for the $\Gamma$-limit of the sequence $\{F_\e\}$. We let $F_0:\mathcal{C}_g\to\R$ be given by

 \begin{equation}
   \label{eq:f0}
 F_0[Q]:=\left\{
     \begin{array}{ll}
       \int_{\mathcal M}\left\{f_{e}^0(\nabla_\M Q,\nu)+\frac{1}{\delta^2}f_{LdG}(Q)+2f_s^{(1)}(Q,\nu)\right\}\,d\mathcal{H}^2(x)& \mbox{ if }Q\in H_g, \\
       +\infty & \mbox{ otherwise, }
     \end{array}
 \right.
 \end{equation}
where, recalling \eqref{newelastic} we define
\begin{multline}
\label{eq:gosharius}
f_{e}^0(\nabla_\M Q,\nu):=\min_{ G\in\mathcal A}f_e( G\otimes\nu+\nabla_\M Q)\\=\frac{1}{2}\sum_{i=1}^3\left\{{\left|\nabla_{\mathcal M}{Q_i}\right|}^2+M_2{\left(\mathrm{div}_{\mathcal M}Q_i\right)}^2+{M_3}\left(\nabla_{\mathcal M}Q_i\cdot{\left(\nabla_{\mathcal M}Q_i\right)}^T\right)\right\} \\ +\min_{ G\in\mathcal A}\left[\sum_{i=1}^3\left\{\left(M_2\left(\mathrm{div}_{\mathcal M}Q_i\right)\nu+M_3{\left(\nabla_{\mathcal M}Q_i\right)}^T\nu\right)\cdot  G_i+\frac{1}{2}{\left| G_i\right|}^2+\frac{1}{2}(M_2+M_3){( G_i\cdot\nu)}^2\right\}\right].
\end{multline}
Here, as will become apparent later on, $ G$ arises as a remnant of the normal component of the gradient of $Q$.

\begin{remark}
\label{r:planar}
We wish to point out an omission in Theorem 5.1 of \cite{GMS} where the $\Gamma$-limit should have also been defined using \eqref{eq:gosharius}. When $M_2=M_3=0$ this theorem is true as stated. Otherwise, the statement and the proof should be modified as in this paper. We note that the parameter studies in Section 6 of \cite{GMS} are unaffected as they are conducted in the regime $M_2=M_3=0$.
\end{remark}

%}{|\nabla_\M Q|}^2+M_2(\mathrm{div}_\M Q)^2+M_3\sum_{i=1}^3\nabla_\M Q_i\cdot\left(\nabla_\M Q_i\right)^T.\]

In order to phrase our $\Gamma$-convergence result we must deal with the issue that $F_\e$ and $F_0$ are defined on very different spaces, a situation common to dimension-reduction analyses involving $\Gamma$-convergence. To address this, we recall the association introduced earlier 
between any mapping, say $f$, defined on $\Omega_\e$ and the mapping $\hat{f}$ defined on $\M\times (-1,1)$, cf. \eqref{fhat}. Then we will
define the topology of the $\Gamma$-convergence as weak $H^1$-convergence in the following sense:
\beq
\mbox{We write}\quad Q_\e \stackrel{\wedge}{\rightharpoonup} Q\quad \mbox{if}\quad \hat{Q}_\e\rightharpoonup Q\;\mbox{weakly in}\; H^1(\M\times(-1,1);\mathcal{A})\label{weaktop}
\eeq
for any sequence $\{Q_\e\}\subset \mathcal{C}_g^\e$ (cf. \eqref{Ceg}) and any limit $Q\in \mathcal{C}_g$

We now state our main theorem on dimension reduction via $\Gamma$-convergence.  For those unfamiliar with the notion, we refer, for example, to \cite{DalMaso}. 

\begin{theorem}
\label{t1}
Fix $g\in H^{1/2}\left(\partial\M;\mathcal{A}\right)$ such that the set $H_g$ is nonempty. Assume that $-1< M_3<2$, and $-\frac{3}{5}-\frac{1}{10}M_3< M_2$. Let $F_\e$ be given by \eqref{nden}, with $f_e$, $f_{LdG}$ and $f_s$ given by \eqref{newelastic}, \eqref{newLdG} and \eqref{eq:se} respectively. Then $\Gamma$-$\lim_\eps{F_\eps}=F_0$ in the weak  $H^1$  topology defined in \eqref{weaktop}.
Furthermore, if a sequence $\left\{Q_\eps\right\}_{\eps>0}\subset \mathcal{C}_g^\e$ satisfies a uniform energy bound $F_\e[Q_\e]<C_0$ then there is a subsequence $\{\hat{Q}_{\e_j}\}$ such that $\hat{Q}_{\e_j}\stackrel{\wedge}{\rightharpoonup} Q$ as $\e_j\to 0$
for some $Q\in H_g^1.$
\end{theorem}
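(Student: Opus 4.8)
The plan is to establish the standard three ingredients of a $\Gamma$-convergence proof --- compactness, the $\liminf$ (lower bound) inequality, and the existence of a recovery sequence --- with the compactness statement in the final sentence of the theorem following as an easy byproduct of the energy estimates used in the lower bound.

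\textbf{Compactness.} Given a sequence $\{Q_\e\}\subset\cg^\e$ with $F_\e[Q_\e]<C_0$, I first pass to the fixed domain via $\hat Q_\e$ on $\M\times(-1,1)$, using the change of variables $X=x+\e t\nu(x)$ whose Jacobian is $\e\,\det\Phi^{-1}=\e(1+O(\e))$ uniformly on the compact manifold. Substituting the expansions \eqref{gradexp}--\eqref{divexp} into $f_e$, the dominant contribution to $\frac{1}{\e}\int_{\Omega_\e}f_e(\nabla Q)\,dV$ is $\frac{1}{\e^2}\int_{\M\times(-1,1)}$ of a positive-definite quadratic form in $(\hat Q_\e)_t$, plus an $O(1)$ term controlling $\nabla_\M\hat Q_\e$. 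The hypotheses $-1<M_3<2$ and $-\tfrac35-\tfrac1{10}M_3<M_2$ are exactly what is needed for this quadratic form (in the full gradient, decomposed into normal-normal, normal-tangential, and tangential-tangential blocks) to be coercive; I would isolate the precise inequality by writing $f_e$ in terms of the components of $\nabla_\M\hat Q_j$ and $(\hat Q_j)_t$ and diagonalizing. Coercivity then yields $\|\nabla_\M\hat Q_\e\|_{L^2}\le C$ and $\frac1\e\|(\hat Q_\e)_t\|_{L^2}\le C$; together with the $L^\infty$ bound on $Q$ coming from \eqref{eq:bnds} (or, failing that, from the coercivity of $f_{LdG}$ at infinity) this gives a uniform $H^1(\M\times(-1,1))$ bound. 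Extract a weakly convergent subsequence $\hat Q_{\e_j}\rightharpoonup Q$. Since $\frac1{\e_j}\|(\hat Q_{\e_j})_t\|_{L^2}$ is bounded, $\|(\hat Q_{\e_j})_t\|_{L^2}\to0$, so $Q_t\equiv0$; and since $\frac1\e\int_{\M_{-\e}\cup\M_\e}f_s^{(0)}\,d\mathcal H^2\le C_0$ forces $\int_\M f_s^{(0)}(Q,\nu)=0$ in the limit (the traces on $\M_{\pm\e}$ converge to the trace on $\M$ by the trace theorem and the fact that $\M_{\pm\e}\to\M$), the limit lies in $H_g$. The lateral Dirichlet condition passes to the limit by continuity of the trace, so $Q\in H_g\subset\cg$.

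\textbf{Lower bound.} For any $Q_\e\stackrel{\wedge}{\rightharpoonup}Q$ I may assume $\liminf F_\e[Q_\e]<\infty$, hence (by the compactness argument) $Q\in H_g$. Writing $G_\e:=\tfrac1\e(\hat Q_\e)_t$ (a traceless symmetric tensor-valued map, since $\mathcal A$ is a linear space), the leading elastic term is $\int_{\M\times(-1,1)}f_e\big(G_\e\otimes\nu+\nabla_\M\hat Q_\e+O(\e)\big)$; by weak lower semicontinuity of the convex integrand $f_e$ (convex because of the coercivity just established), and the pointwise bound $f_e(G_\e\otimes\nu+\nabla_\M\hat Q_\e)\ge f_e^0(\nabla_\M\hat Q_\e,\nu)$ from the very definition \eqref{eq:gosharius}, one obtains $\liminf\ge\int_{\M\times(-1,1)}f_e^0(\nabla_\M Q,\nu)\,dx\,dt=\int_\M f_e^0(\nabla_\M Q,\nu)\,d\mathcal H^2$, using $Q_t=0$ so there is no $t$-dependence. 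The potential term passes to the liminf by Fatou / weak lower semicontinuity, and the surface term $\tfrac1\e\int_{\M_{-\e}\cup\M_\e}(f_s^{(0)}+\e f_s^{(1)})$ contributes, in the limit, $\int_\M 2f_s^{(1)}(Q,\nu)\,d\mathcal H^2$ (the factor $2$ from the two faces), the $f_s^{(0)}$ part being nonnegative and hence only helping. The $O(\e)$ remainders in $\Phi$ are harmless since $\nabla_\M\hat Q_\e$ is bounded in $L^2$.

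\textbf{Recovery sequence.} Given $Q\in H_g$, I take $\hat Q_\e(x,t):=Q(x)+\e t\,G^*(x)$ where $G^*(x)\in\mathcal A$ is the minimizer realizing $f_e^0(\nabla_\M Q(x),\nu(x))$ in \eqref{eq:gosharius} (the minimizer exists and is unique by strict convexity and is measurable in $x$, and it inherits enough regularity from $Q$ and $\nu$ to be admissible; a density/mollification argument handles the general $H^1$ case). Then $Q_\e$ is the corresponding map on $\Omega_\e$; it satisfies the lateral boundary condition because $Q$ does and the correction vanishes there up to adjusting near $\partial\M$. One checks $\tfrac1\e(\hat Q_\e)_t=t\,G^*\to0$ in the relevant sense but $G_\e\otimes\nu=t\,G^*\otimes\nu$ is precisely the competitor needed, and $\hat Q_\e\rightharpoonup Q$. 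Plugging into $F_\e$ and using the expansions, the elastic energy converges to $\int_\M f_e^0(\nabla_\M Q,\nu)$, the potential to $\tfrac1{\delta^2}\int_\M f_{LdG}(Q)$, and the surface term: since $f_s^{(0)}(Q,\nu)=0$ on $\M$ and $Q_\e$ on $\M_{\pm\e}$ differs from $Q$ by $O(\e)$, $\tfrac1\e\int_{\M_{\pm\e}}f_s^{(0)}(Q_\e,\nu)=\tfrac1\e\int O(\e^2)=O(\e)\to0$, while $\tfrac1\e\cdot\e\int f_s^{(1)}\to 2\int_\M f_s^{(1)}(Q,\nu)$. Hence $\limsup F_\e[Q_\e]\le F_0[Q]$.

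\textbf{Main obstacle.} The delicate point is the surface-energy bookkeeping at order $\e$: one must verify that $f_s^{(0)}$, which is forced to vanish on the limit but appears divided by $\e$, does not contribute, and simultaneously that the $\e f_s^{(1)}$ term survives --- this requires carefully controlling the trace of $Q_\e$ on the moving surfaces $\M_{\pm\e}$ (where $\nu$ is the normal to $\M$, not to $\M_{\pm\e}$) and quantifying the rate at which $f_s^{(0)}(Q_\e,\nu)$ decays. Relatedly, in the lower bound one needs that a merely $L^2$-bounded $G_\e$ does not allow $f_s^{(0)}$ to concentrate; this is why the convergence of traces $\M_{\pm\e}\to\M$ and the assumed nonemptiness of $H_g$ (so the limit problem is not vacuous) both matter. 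The coercivity computation pinning down the exact constants $-1<M_3<2$, $M_2>-\tfrac35-\tfrac1{10}M_3$ is routine linear algebra but must be done honestly since it is what makes $f_e$ --- and hence $f_e^0$ --- a well-defined convex quantity with existing minimizer $G^*$.
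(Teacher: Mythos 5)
Your proposal follows the same route as the paper's proof in all three parts. The compactness and lower bound rest on the convexity and coercivity of $f_e$ guaranteed by the hypotheses on $M_2,M_3$ (the paper cites Lemma 4.2 of Gartland--Davis for this rather than redoing the linear algebra), on the $O(\e^{-2})$ penalization of $(\hat Q_\e)_t$ together with convergence of traces to force the limit into $H_g$, and on combining weak lower semicontinuity of the convex integrand $f_e$ evaluated at $\bar q\otimes\nu+\nabla_\M Q$ (where $\bar q$ is the weak $L^2$ limit of $\tfrac1\e(\hat Q_\e)_t$) with the pointwise inequality $f_e(G\otimes\nu+\nabla_\M Q)\ge f_e^0(\nabla_\M Q,\nu)$ coming from the definition \eqref{eq:gosharius}. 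Your recovery sequence is the paper's ansatz $\hat Q_\e=Q+\e t\,\bar G$ with $\bar G$ the minimizer in \eqref{eq:gosharius}, and your surface-energy bookkeeping (the $O(\e^2)/\e$ cancellation of $f_s^{(0)}$ along the recovery sequence and the survival of $2f_s^{(1)}$ from the two faces) is exactly the computation in \eqref{eq:cruz}.

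The one genuine gap is the parenthetical ``a density/mollification argument handles the general $H^1$ case.'' Since $\bar G$ depends on $\nabla_\M Q$, the ansatz involves second derivatives of $Q$, so an approximation step is indeed unavoidable; but a standard convolution of $Q\in H_g$ does \emph{not} stay in $H_g$: it destroys the pointwise constraint $f_s^{(0)}(Q(x),\nu(x))=0$ --- for instance the requirement that the spatially varying normal $\nu(x)$ be an eigenvector of $Q(x)$ with eigenvalue $\beta$ --- and then $F_0$ is $+\infty$ on the approximants and the diagonalization argument collapses. The paper resolves this by partitioning $\M$ into patches carrying a smooth orthonormal frame $\{\T,\N,\nu\}$, writing $Q$ on each patch in the form \eqref{QT}--\eqref{pvble} in terms of the scalars $p_1^{(j)},p_2^{(j)}$ (plus a third scalar when $\alpha_0=0$ but $\gamma_0>0$), mollifying those scalars, and gluing the patches with a partition of unity, which preserves the eigenpair $(\nu,\beta)$ exactly. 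You would need to supply this, or an equivalent constraint-preserving smoothing, to close the argument.
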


\begin{proof} Given any $Q\in \mathcal{C}_g$ we recall the association between $Q$ and $\hat{Q}\in H^1(\M\times (0,1))$ that is given by \eqref{fhat}. Using \eqref{gradexp}, \eqref{divexp} and the easily checked properties
\[
dV=dV(X)\sim \e\big(1+O(\e)\big)d\mathcal{H}^2_\M(x)\,dt,\qquad d\mathcal{H}^2_{\M_\e}\sim \big(1+O(\e)\big)d\mathcal{H}^2_\M\] we find that the energy $F_\e(Q)$ can be written in terms of $\hat{Q}$ as follows:
\begin{eqnarray}
&&F_\e[Q]\sim\nonumber\\
&&\frac{1}{2}\sum_{j=1}^3\int_{-1}^{1}\int_\M\left\{\abs{ \frac{1}{\e}\hat{Q}_{j_t}\otimes\nu+\nabla_\M\hat{Q}_j-\e t\nabla_\M \hat{Q_j}\nabla_\M\nu}^2\right.\nonumber\\
&&+M_2\left(\frac{1}{\e}(\hat{Q}_{ij})_t\nu^i+\dvr_\M \hat{Q}_j-\e t\nabla_\M\hat{Q}_{ij}\nabla_\M\nu\,e_i\right)^2\nonumber\\
&&\left.+M_3\left(  \frac{1}{\e}\hat{Q_j}_t\otimes\nu+\nabla_\M\hat{Q}_j-\e t\nabla_\M \hat{Q_j}\nabla_\M\nu\right)\cdot
\left(\frac{1}{\e}\hat{Q_j}_t\otimes\nu+\nabla_\M\hat{Q}_j-\e t\nabla_\M \hat{Q_j}\nabla_\M\nu    \right)^T\right\}\nonumber\\
&&\big(1+O(\e)\big)d\mathcal{H}^2(x)\,dt\nonumber\\
&&+\nonumber\\
&&\frac{1}{\delta^2}\int_{-1}^{1}\int_\M\left\{  2A\,\mathrm{tr}\left(\hat{Q}^2\right)+\frac{4}{3}B\,\mathrm{tr}\left(\hat{Q}^3\right)+{\left(\mathrm{tr}\left(\hat{Q}^2\right)\right)}^2\right\}\big(1+O(\e)\big)
d\mathcal{H}^2(x)\,dt\nonumber\\
&&+\nonumber\\
&&\frac{1}{\e}\int_{\M}\left\{f_s^{(0)}(\hat{Q}(x,1),\nu)+\e f_s^{(1)}(\hat{Q}(x,1),\nu)+f_s^{(0)}(\hat{Q}(x,-1),\nu)+\e f_s^{(1)}(\hat{Q}(x,-1),\nu)\right\}\nonumber\\
&&\big(1+O(\e)\big)d\mathcal{H}^2(x).
\label{ugly}
\end{eqnarray}

First, we demonstrate how one can choose a recovery sequence. If $Q_0\in \mathcal{C}_g\setminus H_g$, so that either $(Q_0)_t\not\equiv 0$ or else $f_s^{(0)}(Q_0(x),\nu(x))>0$ on a set of positive measure on $\M$, then choosing $Q_\e$ such that
$\hat{Q}_\e\equiv Q_0$, from \eqref{ugly} we readily check that
\[
\lim_{\e\to 0} F_\e[Q_\e]=+\infty=F_0[Q_0].
\] 
Notice in particular that $t$-derivatives enter at $O(\frac{1}{\e^2})$ and $f^{(0)}_s$ contributes at $O(\frac{1}{\e})$ in the energy. If, on the other hand, $Q_0\in H_g$, then of course all $t$-derivatives drop in \eqref{ugly}, as do the $1/\e$ terms from $f^{(0)}_s$ in the last integral and in the $\e\to 0$ limit, one immediately arrives at $F_0(Q_0).$ 

 Now given any $Q_0\in H_g$, we set
  \begin{equation} 
\label{eq:trump}
\hat Q_\e(x,t)=Q_0(x)+\e t\bar G(x),
\end{equation}
where $\bar G(x)$ solves \eqref{eq:gosharius} with $Q_0$ playing the role of $Q$. One technicality we must confront with this proposed recovery sequence, however, is that since $\nabla_\M \hat Q_\e=\nabla_\M Q_0+\e t\nabla_\M\bar G$ and since from \eqref{eq:gosharius} we see that $\bar G$
depends on $\nabla_\M Q_0$ {(cf. \eqref{eq:eq})}, we are in the position of taking second derivatives of the $H^1$ tensor $Q_0$. Let us first establish the success of this recovery sequence under the additional assumption that $Q_0$ is smoother than $H^1$, say $H^2$, and then we will treat the more general case at the end of the argument through a mollification procedure.

Note also that when the surface $\M$ is not closed, this proposed recovery sequence must be modified via multiplication by a cutoff function so as to maintain $g$-valued boundary data. As long as the width of the boundary layer associated with the cutoff function is of order lower than $\e$, say $\sqrt{\e}$, the contribution to the energy of that layer will be negligible. We will leave out the details of this alteration.

Now consider the expression \eqref{ugly}, evaluated using \eqref{eq:trump} as the proposed recovery sequence. Clearly, the Landau-de Gennes contribution to $F_\e$ trivially converges to its limiting value and we only need to establish convergence of elastic and surface contributions. Next we observe that the elastic energy along the recovery sequence approaches
\begin{multline}
\label{eq:carson}
\frac{1}{2}\sum_{j=1}^3\int_\M\left\{\abs{\bar G_j\otimes\nu+\nabla_\M\left(Q_0\right)_j}^2+M_2\left(\bar G_j\cdot\nu+\dvr_\M \left(Q_0\right)_j\right)^2 \right. \\
\left.+M_3\left(\bar G_j\otimes\nu+\nabla_\M\left(Q_0\right)_j\right)\cdot
\left(\bar G_j\otimes\nu+\nabla_\M\left(Q_0\right)_j\right)^T\right\}d\mathcal{H}^2(x),
\end{multline}
when $\e\to0$. In light of \eqref{eq:gosharius}, we conclude that \eqref{eq:carson} is exactly the integral over $\M$ of $f_e^0(\nabla_\M Q_0,\nu)$. 

Turning our attention to the surface energy term, we have from \eqref{ugly} that the energy contribution due to $f_s^{(0)}$ is given by
\begin{multline}
\label{eq:cruz}
\frac{1}{\e}\int_{\M}\left\{f_s^{(0)}(\hat{Q}_\e(x,1),\nu(x))+f_s^{(0)}(\hat{Q}_\e(x,-1),-\nu(x))\right\}d\mathcal{H}^2(x) \\ =\frac{1}{\e}\int_{\M}\left\{f_s^{(0)}(Q_0(x)+\e \bar G(x),\nu(x))+f_s^{(0)}(Q_0(x)-\e \bar G(x),-\nu(x))\right\}d\mathcal{H}^2(x).
\end{multline}
Since 
\begin{multline*}
f_s^{(0)}(Q_0(x)+\e \bar G(x),\nu(x))+f_s^{(0)}(Q_0(x)-\e \bar G(x),-\nu(x))\\=2\alpha_0\e^2\left[\bar G(x)\nu(x)\cdot\nu(x)\right]^2+2\gamma_0\e^2{\left|\left(\mathbf{I}-\nu(x)\otimes\nu(x)\right)\bar G(x)\nu(x)\right|}^2
\end{multline*}
on $\M$, the integral in \eqref{eq:cruz} approaches  zero. Thus the limiting contribution to the surface energy is simply
\[2\int_{\M} f_s^{(1)}(Q_0(x),\nu)\,d\mathcal{H}^2(x).\]
We conclude that the energy of the recovery sequence given by \eqref{eq:trump} approaches the $\Gamma$-limit $F_0[Q_0]$.

It remains for us to construct a recovery sequence in the general case where $Q_0$ is in $H^1$ but no smoother. An obvious approach is to mollify $Q_0$ but this mollification must be done with some care. Recall that in addition to satisfying the boundary data $g$, the tensor $Q_0$ is required to satisfy the condition 
$f_s^{(0)}(Q_0(x))\equiv0$ on $\M$. Simply convolving $Q_0$ with a standard mollifier will clearly violate both of these requirements. Maintaining the boundary condition can be handled simply enough through the straight-forward use of a smooth interpolation in a boundary layer, just as we described above for adjusting the tensor {$\bar G$} near the boundary. However, obtaining a smoother version of $Q_0$ that still gives zero contribution to the leading order surface density $f_s^{(0)}$ is not as immediate. Recall, for example, that if the constants $\alpha_0$ and $\gamma_0$ in the definition of $f_s^{(0)}$ are both positive then admissible tensors $Q_0$ must maintain the normal vector $\nu(x)$ to $\M$ as an eigenvector with corresponding eigenvalue $\beta$ at each $x$ on the curved surface $\M$.

To this end, we partition $\M$ into finitely many smooth pieces, so that say, $\M=\M_1\cup\ldots\cup {\M_n}$ and on each piece we introduce
a smoothly varying orthonormal frame   $\left\{\T,\N,\nu\right\}$ where $\left\{\T,\,\N\right\}$ is an orthonormal frame in a plane tangent to $\M$ at a given point. Such a smooth frame will not exist globally on $\M$ if for instance $\M$ is a topological sphere, hence the need for the decomposition. Then in the case $\alpha_0\gamma_0\not=0$, for example, we can introduce the scalar quantities $p_1^{(j)}$ and $p_2^{(j)}$ on each $\M_j$ by expressing $Q_0$ as
\begin{equation}
  \label{QT}
  Q_0=p_1^{(j)}(\T\otimes\T-\N\otimes\N)+p_2^{(j)}(\T\otimes\N+\N\otimes\T)+\frac{3\beta}{2}\left(\nu\otimes\nu-\frac{1}{3}I\right)
\end{equation}
so that relative to this orthonormal basis one has
a representation of $Q_0$ on $\M_j$ given by
\begin{equation}
\label{pvble}
  Q_0(x)=\left(
    \begin{array}{ccc}
      p_1^{(j)}(x)-\frac{\beta}{2} & p_2^{(j)}(x) & 0 \\
      p_2^{(j)}(x) & -p_1^{(j)}(x)-\frac{\beta}{2} & 0 \\
      0 & 0 & \beta
    \end{array}
\right).
\end{equation}
This is a change of variables invoked, for example, in \cite{bauman_phillips_park}, motivated by simulations in \cite{PhysRevLett.59.2582}. In this way, the tensor $Q_0$ is characterized by just $p_1^{(j)}$ and $p_2^{(j)}$ and by mollifying these two quantities {on each patch} we obtain a smooth approximation to the original $Q_0$ on that patch, maintaining the desired conditions that $\nu$ is always an eigenvector with corresponding eigenvalue $\beta$. {Now using the partition of unity to glue together the smooth approximation on individual patches and employing the fact that all of these approximations have the common eigenpair $\nu, \beta$, we arrive at a global smooth approximation of $Q_0$ in $H_g$.} 

If, to describe another possibility, one is working in the case where $\alpha_0=0$ but $\gamma_0>0$ so that $\nu$ must be an eigenvector but the corresponding eigenvalue is free, one can again use the representation \eqref{QT}-\eqref{pvble} but the constant $\beta$ is replaced by a third scalar unknown, say $r^{(j)}(x)$. Again mollification of $p_1^{(j)}$, $p_2^{(j)}$ and $r^{(j)}$ produces a smooth approximation to $Q_0$ on each $\M_j$ that preserves the condition $f_s^{(0)}(Q_0)=0.$

Denoting the mollification of the original tensor $Q_0\in H_g$ by the smooth sequence $\{Q_{0,\delta}\}\subset H_g$ with $\delta>0$ denoting the mollification parameter, the previously presented argument goes to show that the sequence $\{Q_{\e,\delta}\}$ of tensors defined on $\Omega_\e$ characterized by
\[
\hat Q_{\e,\delta}(x,t):=Q_{0,\delta}+\e t\bar G_\delta(x)
\]
satisfies the required property of a recovery sequence, namely
\[
\lim_{\e\to 0}F_\e[Q_{\e,\delta}]=F_0[Q_{0,\delta}].
\]
Here $G_\delta$ minimizes \eqref{eq:gosharius} for $Q=Q_{0,\delta}$.
Since the proposed $\Gamma$-limit $F_0$ is clearly continuous under $H^1$-convergence and since $Q_{0,\delta}\to Q_0$ in $H^1$, we have that $F_0[Q_{0,\delta}]\to F_0[Q_0]$, and so the existence of a recovery sequence for $Q_0$ follows by a standard diagonalization argument applied to $\{Q_{\e,\delta}\}$.

For the lower semicontinuity part of $\Gamma$-convergence, consider an arbitrary sequence $\left\{Q_\eps\right\}_{\eps>0}\subset\mathcal{C}^g_\e$ such that $\hat{Q}_\e\rightharpoonup Q_0$ in $H^1(\M\times (-1,1);\mathcal{A})$ for some $Q_0\in \mathcal{C}_g.$ Clearly we may assume
\[
\liminf_{\e\to 0} F_\e[Q_\e]<+\infty
\]
and so from \eqref{ugly}, {collecting the leading order $\mathcal{O}\left(\eps^{-2}\right)$ terms}, it is apparent that necessarily 
\begin{equation}
\label{eq:peter}
\left\|(\hat{Q}_\e)_t\right\|_{L^2}\leq C\e.
\end{equation}
Thus, $Q_0=Q_0(x)$ only. Similarly, from the strong convergence of traces under
weak $H^1$-convergence, the last integral in \eqref{ugly} will only stay finite in the $\e\to 0$ limit if $f_s^{(0)}(Q_0(x),\nu(x))=0$ as well. Hence, we may assume that $Q_0\in H_g$. Furthermore, by \eqref{eq:peter} we also have that up to a subsequence, $\frac{1}{\e}(\hat{Q}_\e)_t\rightharpoonup \bar q$ as $\e\to0$ for some $\bar q$ in $L^2(\mathcal M\times(-1,1);\mathcal A)$. It follows from {the assumption $\hat{Q}_\e\rightharpoonup Q_0$ in $H^1(\M\times (-1,1);\mathcal{A})$} that 
\[\frac{1}{\e}(\hat{Q}_\e)_t\otimes\nu+\nabla_\M\hat{Q}_\e\rightharpoonup\bar q\otimes\nu+\nabla_\M{\hat Q}_{0}\quad\text{weakly in }L^2.\]
It has been established in (\cite{Gartland_Davis}, Lemma 4.2) and \cite{Longa} that when the elastic constants satisfy the conditions $-1< M_3<2$, and $-\frac{3}{5}-\frac{1}{10}M_3< M_2$, then the elastic energy density $f_e$ is convex and consequently weakly lower semicontinuous in $H^1(\mathcal M\times(-1,1);\mathcal A)$. Hence, using \eqref{eq:gosharius} and \eqref{ugly}, we obtain
\begin{multline*}
\liminf_{\e\to0}\frac{1}{\e}\int_{\Omega_\e}f_e\left(\nabla_XQ_\e\right)dX=\liminf_{\e\to0}\int_{\M\times(-1,1)}f_e\left(\frac{1}{\e}(\hat{Q}_\e)_t\otimes\nu+\nabla_\M\hat{Q}_\e\right)d{\mathcal H}^2(x)dt\\ \geq\int_{\M\times(-1,1)}f_e\left(\bar q\otimes\nu+\nabla_\M{\hat Q}_{0}\right)d{\mathcal H}^2(x)dt\geq\int_{\M}f_e^0\left(\nabla_\M{\hat Q}_{0},\nu\right)d{\mathcal H}^2(x).
\end{multline*}
In addition to convexity, it is also shown in \cite{Gartland_Davis} that under these assumptions on the elastic coefficients one has
\begin{equation}
\label{eq:coer}
f_e(\nabla Q)\geq C{|\nabla Q|}^2
\end{equation}
pointwise for all admissible $Q$, where $C>0$ does not depend on $\M$ or $\epsilon$. Thus using Sobolev embedding and convergence of traces to handle the limits of the second and
third integrals in \eqref{ugly}, one finds
\[\liminf_{\eps\to0}F_\eps[Q_\eps]\geq F_0[Q_0].\]
This proves the second part of $\Gamma$-convergence.

Note that when $M_2=M_3=0$, the quadratic form arising in the definition of the elastic energy density is diagonal. This significantly simplifies the proof of $\Gamma$-convergence. In this case, one can always choose a trivial recovery sequence and for the lower semicontinuity the minimizer $\bar G$ vanishes. 

Finally, since the uniform energy bound implies a uniform $H^1$-bound with an $L^2$-bound on $t$-derivatives that is of order $\e$, there exists a subsequence $\{\hat{Q}_{\e_j}\}$ weakly convergent in $H^1(\M\times(-1,1);\mathcal{A})$ to a limit $Q_0$ that is independent of $t$. Further, strong convergence of traces in $L^2$ implies through the boundedness of the third integral in \eqref{ugly} that $Q_0\in H_g$.
\end{proof}

{\begin{remark}
{\em The "remnant" terms of the limiting elastic energy density $f_e^0(\nabla_\M Q_0,\nu)$ constituting the last line of \eqref{eq:gosharius} are an indication that for thin elastic shells the behavior of the minimizer in the direction normal to the surface of the film is slaved to variations along the manifold $\M$. Recall that a standard implication of $\Gamma$-convergence along with compactness is that if $\{Q_\e\}$ is a sequence of minimizers to $F_\e$ then there exists a subsequence $\{Q_{\e_j}\}$ such that $ \hat{Q}_{\e_j}\rightharpoonup Q_0$ where $Q_0$ is a minimizer of the $\Gamma$-limit $F_0.$ Consequently, to first order in $\e$, it is} not {\em the case that minimizers of $F_\e[Q]$ are obtained by trivially extending the minimizers of $F_0[Q]$ to be constant along the normals to $\M$. Indeed, with the usual association $X=x+h t\nu(x)$, we rather have that
\[Q_\e(X)\sim Q_0(x)+ht\bar{G}(x),\]
for $x\in\M$, $t\in(-1,1),$ and $h>0$ small.}

\end{remark}}

\begin{remark}
  {\em  When $M_2=M_3=0$, one can easily argue that the convergence of the subsequence is, in fact, strong. Indeed, one has 
  $F_\eps[Q]\to F_0[Q]$ for every $Q\in H_g$ viewed as an element
  of $H^1(\Omega_\e)$ that is constant along normals to $\M$. Hence $\limsup_{\e\to 0}F_\eps[Q_\eps]\leq \limsup_{\e\to 0}F_\e[Q_0]=F_0[Q_0]$. Since
\begin{multline*}
\int_{\M\times(-1,1)}f_{LdG}(\hat{Q}_\eps)\,d\mathcal{H}^2(x)\,dt+\int_{\M\times\{-1,1\}}f_s^{(1)}(\hat{Q}_\eps,\nu)\,d\mathcal{H}^2(x) \\ 
\to\int_{\M}\left(f_{LdG}(Q_0)+2f_s^{(1)}(Q_0,\nu)\right)\,d\mathcal{H}^2(x)\mbox{ as }\eps\to0,
\end{multline*}
we have
\[\limsup_{\eps\to0}\int_{\M\times(-1,1)}\left(|\nabla_\M\hat{Q}_{\eps}|^2+\frac{1}{\epsilon^2}{|(\hat{Q}_{\eps})_t|}^2\right)\,d\mathcal{H}^2(x)\,dt\leq\int_{\M}\abs{\nabla_\M Q_0}^2\,
d\mathcal{H}^2(x).\]
Combining this with the lower semicontinuity of the $L^2$-norm of the derivative due to the convexity of $f_e$, strong convergence in $\cg$ along a subsequence follows.}
\end{remark}

\section{Expression for the limiting energy $f_e^0$}
\label{s:fezero}
In the Section \ref{s:conv}, we observed that the proof of $\Gamma$-convergence is significantly simpler when $M_2=M_3=0$ because the corresponding quadratic form is diagonal and one can choose a trivial recovery sequence. In this case, the Dirichlet integral over the three-dimensional domain reduces to its analog over the manifold and the ``thin" dimension decouples from dimensions that survive in the limiting problem. The following two lemmas demonstrate that when $M_2$ or $M_3$ are present this will not be the case and there are remnants of the disappearing dimension that survive in the expression for the limiting functional. For simplicity of presentation, we will derive an explicit expression for $f_e^0$ when $M_3=0$ and then state the general formula for $f_e^0$ without proof.  The general expression can be found in the same way as in Lemma \ref{l:phantom}, albeit using significantly more cumbersome computations.
\begin{lemma}
\label{l:phantom}
Suppose that $M_3=0$ and $M_2>-\frac{3}{5}$. Then 
\begin{multline}
\label{eq:phantom}
f_e^0\left(\nabla_{\mathcal M} Q,\nu\right)=\frac{1}{2}\left\{{\left|\nabla_{\mathcal M}{Q}\right|}^2+\frac{2M_2}{M_2+2}{\left|\mathrm{div}_{\mathcal M}Q\right|}^2-\frac{M_2^2}{(M_2+2)(2M_2+3)}{\left(\nu\cdot\mathrm{div}_{\mathcal M}Q\right)}^2\right\}.
\end{multline}
\end{lemma}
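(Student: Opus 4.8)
The plan is to compute the minimization over $G \in \mathcal{A}$ in the definition \eqref{eq:gosharius} explicitly when $M_3 = 0$. Setting $M_3 = 0$, the quantity to minimize becomes
\[
\Psi(G) := \sum_{i=1}^3 \left\{ M_2 (\dvr_\M Q_i) (\nu \cdot G_i) + \tfrac{1}{2} |G_i|^2 + \tfrac{1}{2} M_2 (G_i \cdot \nu)^2 \right\},
\]
which decouples across columns $i = 1,2,3$, so I would minimize over each $G_i \in \R^3$ separately — except that the traceless symmetry constraint $G \in \mathcal{A}$ couples the columns, and this is exactly where the main subtlety lies. First I would observe that $\Psi$ is a strictly convex quadratic in $G$ (since $M_2 > -\tfrac{3}{5} > -1$ keeps $\tfrac{1}{2}|G_i|^2 + \tfrac{1}{2}M_2(G_i\cdot\nu)^2$ coercive), so a unique minimizer exists; the Euler–Lagrange equation for the constrained problem is $\nabla\Psi(\bar G) \perp \mathcal{A}$, i.e. the unconstrained gradient of $\Psi$ at $\bar G$ must be a scalar multiple of the identity.

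The key computational steps: (i) write the unconstrained stationarity condition column-by-column, $\bar G_i + M_2(\bar G_i \cdot \nu)\nu = -M_2(\dvr_\M Q_i)\nu + \lambda e_i$ for a Lagrange multiplier $\lambda$ enforcing $\tr \bar G = 0$; (ii) solve for $\bar G_i$ by taking the $\nu$-component of this vector identity, which gives $(1 + M_2)(\bar G_i\cdot\nu) = -M_2 \dvr_\M Q_i + \lambda\nu^i$, hence
\[
\bar G_i = \lambda e_i - \frac{M_2}{1+M_2}\bigl(M_2 \dvr_\M Q_i\bigr)\nu \quad\text{(after back-substitution)},
\]
with the precise coefficients to be tidied up; (iii) determine $\lambda$ from $\sum_i \bar G_{ii} = 0$, using $\sum_i \dvr_\M Q_i \, \nu^i = \nu\cdot\dvr_\M Q$ and $\sum_i \nu^i e_i^{(i)}$-type contractions — here I'd need $Q$ traceless and symmetric so that $\sum_i (\dvr_\M Q_i)\nu^i$ relates cleanly to $\nu\cdot\dvr_\M Q$ (note $\dvr_\M Q$ denotes the vector with components $\dvr_\M Q_i$, wait — I should be careful: here $\dvr_\M Q_i$ is a scalar, the surface divergence of the $i$-th column, so $\dvr_\M Q$ is a vector in $\R^3$). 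Then $\lambda$ comes out proportional to $\nu \cdot \dvr_\M Q$ times a rational function of $M_2$; (iv) substitute $\bar G$ back into $\Psi$ and add the $M_3 = 0$ version of the leading term $\tfrac12\sum_i\{|\nabla_\M Q_i|^2 + M_2(\dvr_\M Q_i)^2\}$, collecting terms into $|\nabla_\M Q|^2$, $|\dvr_\M Q|^2$, and $(\nu\cdot\dvr_\M Q)^2$ and matching \eqref{eq:phantom}.

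The main obstacle I anticipate is bookkeeping the traceless constraint correctly through steps (i)–(iii): the Lagrange multiplier $\lambda$ must be a scalar (not a full symmetric matrix), because the constraint $G \in \mathcal{A}$ is "$\tr G = 0$ and $G$ symmetric," but $G$ appears in $\Psi$ only through $G_i\cdot\nu$ and $|G_i|^2$, which are already symmetric-insensitive in a way that needs checking — one must verify that the unconstrained minimizer over all of $M^{3\times3}$ with $\tr G = 0$ happens to land in the symmetric subspace, or else carry the symmetry multiplier too. I expect it does land there automatically because the right-hand side $-M_2(\dvr_\M Q_i)\nu + \lambda e_i$ assembles into $-M_2\,\nu\otimes(\dvr_\M Q) + \lambda I$, whose symmetrization issue is governed by whether $\nu\otimes\dvr_\M Q$ is symmetric; since it need not be, one must restrict to $\mathcal{A}$ from the start, solve the symmetric-constrained problem, and the algebra for the denominators $(M_2+2)$ and $(2M_2+3)$ in \eqref{eq:phantom} will emerge from inverting the relevant $2\times2$ or $3\times3$ reduced linear system. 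Once the constrained Euler–Lagrange system is set up correctly, the remainder is a (tedious but routine) linear algebra computation, and the stated ranges $M_3 = 0$, $M_2 > -\tfrac{3}{5}$ are precisely what keeps all the denominators $(M_2+2)(2M_2+3)$ positive so the minimum is finite and attained.
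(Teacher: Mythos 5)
Your plan follows essentially the same route as the paper's own proof: minimize the quadratic in $G$ over symmetric traceless matrices by introducing a scalar Lagrange multiplier for the trace, take the stationarity condition \emph{within} the symmetric subspace (which symmetrizes $\nu\otimes\dvr_{\M}Q$ and $\bar G\nu\otimes\nu$ --- exactly the point you flag in your last paragraph, and the reason the denominators come out as $M_2+2$ and $2M_2+3$ rather than the $1+M_2$ appearing in your column-wise step (ii)), then solve for $\bar G$ and $\lambda$ by taking the trace and contracting with $\nu\otimes\nu$, and substitute back. Modulo carrying out the tidying you already anticipate, this is the paper's computation.
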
  
\begin{proof}
First, note that the lower bound on $M_2$ corresponds to the assumption of the Theorem \ref{t1}. When $M_3=0$, a glance at \eqref{eq:gosharius} shows that we need to minimize the function
\begin{equation}
\label{eq:phi}
\phi( G):=M_2(\nu\otimes\mathrm{div}_{\mathcal M}Q)\cdot  G+\frac{1}{2}{| G|}^2+\frac{M_2}{2}{\left| G\nu\right|}^2
\end{equation}
over the set $\mathcal A$ of symmetric matrices with the zero trace. Assuming that $ G$ is symmetric and enforcing the tracelessness of $ G$ via a Lagrange multiplier $\lambda$, we seek minimizers of 
\begin{equation}
\label{eq:phil}
\phi_\lambda( G):=M_2(\nu\otimes\mathrm{div}_{\mathcal M}Q)\cdot  G+\frac{1}{2}\left({| G|}^2+M_2{\left| G\nu\right|}^2+\lambda\tr( G)\right)
\end{equation}
among all symmetric matrices in $ G\in M^{3\times3}$, subject to the constraint $\tr( G)=0$. Thus, we need to find a pair $( \bar G,\lambda)$ that solves the problem
\begin{equation}
\label{eq:123}
\left\{\begin{array}{l}
2 \bar G+M_2(\nu\otimes\mathrm{div}_{\mathcal M}Q+\mathrm{div}_{\mathcal M}Q\otimes\nu)+M_2( \bar G\nu\otimes\nu+\nu\otimes  \bar G\nu)+\lambda I=0, \\
\tr( \bar G)=0,
\end{array}
\right.
\end{equation}
where the first equation is obtained by finding the derivative of $\phi_\lambda$ with respect to a symmetric $ G$. Taking the trace of the first equation gives
\begin{equation}
\label{eq:124}
2M_2\left(\nu\cdot\mathrm{div}_{\mathcal M}Q+ \bar G\nu\cdot\nu\right)+3\lambda=0.
\end{equation}
Multiplying the first equation respectively from the right and from the left by $\nu\otimes\nu$ and adding the results, gives
\begin{multline}
\label{eq:125}
(M_2+2)( \bar G\nu\otimes\nu+\nu\otimes  \bar G\nu)=-M_2(\nu\otimes\mathrm{div}_{\mathcal M}Q+\mathrm{div}_{\mathcal M}Q\otimes\nu) \\ -\left(2M_2\left(\nu\cdot\mathrm{div}_{\mathcal M}Q+ \bar G\nu\cdot\nu\right)+2\lambda\right)(\nu\otimes\nu).
\end{multline}
Combining \eqref{eq:124} and \eqref{eq:125} allows us to conclude that
\[ \bar G\nu\otimes\nu+\nu\otimes  \bar G\nu=\frac{\lambda}{M_2+2}(\nu\otimes\nu)-\frac{M_2}{M_2+2}(\nu\otimes\mathrm{div}_{\mathcal M}Q+\mathrm{div}_{\mathcal M}Q\otimes\nu).\]
Substituting this expression back into \eqref{eq:123} and taking trace allows us to find that
\[\lambda=-\frac{2M_2\left(\nu\cdot\mathrm{div}_{\mathcal M}Q\right)}{2M_2+3},\]
hence
\begin{equation}
\label{eq: G}
 \bar G=-\frac{M_2}{M_2+2}(\nu\otimes\mathrm{div}_{\mathcal M}Q+\mathrm{div}_{\mathcal M}Q\otimes\nu)+\frac{M_2\left(\nu\cdot\mathrm{div}_{\mathcal M}Q\right)}{(2M_2+3)}\left(\frac{M_2}{M_2+2}(\nu\otimes\nu)+I\right).
\end{equation}
Finally, evaluating \eqref{eq:phi} at this $ \bar G$ and following a sequence of trivial, but tedious calculations proves \eqref{eq:phantom}.
\end{proof}
We now give the general expression for $f^0_e$.
{\begin{lemma}
\label{l:phantom_g}
Suppose that $M_2$ and $M_3$ are defined as in Theorem \ref{t1}. Then
\begin{multline*}
f_e^0\left(\nabla_{\mathcal M} Q,\nu\right)=\frac{1}{2}{\left|\nabla_{\mathcal M}{Q}\right|}^2+\frac{M_2(M_3+2)}{2(M_2+M_3+2)}{\left|\mathrm{div}_{\mathcal M}Q\right|}^2\\+\frac{(M_3^2+2M_3-1)M_2^2+(2M_3^2+5M_3+4)M_2M_3+(M_3^2+3M_3+2)M_3^2}{2(M_2+M_3+2)(2M_2+2M_3+3)}{\left(\nu\cdot\mathrm{div}_{\mathcal M}Q\right)}^2\\+\frac{1}{2}\sum_{i=1}^3\left\{M_3\left(\nabla_{\mathcal M}Q_i\cdot{\left(\nabla_{\mathcal M}Q_i\right)}^T\right)-\frac{2M_2M_3}{M_2+M_3+2}\nu\cdot\left(\nu_i\nabla_{\mathcal M} Q_i\mathrm{div}_{\mathcal M}Q\right)\right\}\\-\frac{M_3^2}{8}{\left|\sum_{i=1}^3\nu_i\left(\nabla_{\mathcal M} Q_i+\nabla_{\mathcal M} Q_i^T\right)\right|}^2+\frac{M_3^2}{4}\frac{M_2+M_3}{M_2+M_3+2}{\left|\sum_{i=1}^3\nu_i\nabla_{\mathcal M} Q_i^T\nu\right|}^2.
\end{multline*}
\end{lemma}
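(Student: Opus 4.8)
The plan is to repeat the argument of Lemma~\ref{l:phantom} but now retaining the $M_3$-dependent terms in \eqref{eq:gosharius}. The objective function to be minimized over $G\in\mathcal{A}$ is
\begin{equation*}
\psi(G):=\sum_{i=1}^3\left\{\left(M_2(\dvr_\M Q_i)\nu+M_3(\nabla_\M Q_i)^T\nu\right)\cdot G_i+\tfrac12|G_i|^2+\tfrac12(M_2+M_3)(G_i\cdot\nu)^2\right\},
\end{equation*}
which is again quadratic and, by the coercivity statement in Theorem~\ref{t1} (equivalently the convexity of $f_e$ recorded at \eqref{eq:coer}), strictly convex on $\mathcal{A}$, so the minimizer $\bar G$ is unique and characterized by the vanishing of the derivative subject to $\tr G=0$. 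First I would introduce the linear forcing matrix $B$ with columns $B_i:=M_2(\dvr_\M Q_i)\nu+M_3(\nabla_\M Q_i)^T\nu$, note that unlike in Lemma~\ref{l:phantom} this $B$ is \emph{not} of the simple rank-one form $\nu\otimes v$ (it carries both a $\dvr_\M Q$ piece along $\nu$ and a genuinely matrix piece $M_3(\nabla_\M Q_i)^T\nu$), and then symmetrize: the Euler--Lagrange system, after adding a Lagrange multiplier $\lambda I$ for the trace constraint, reads
\begin{equation*}
2\bar G+\big(B+B^T\big)+(M_2+M_3)\big(\bar G\nu\otimes\nu+\nu\otimes\bar G\nu\big)+\lambda I=0,\qquad \tr\bar G=0.
\end{equation*}

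Next I would solve this system in the same two-step fashion as in the proof of Lemma~\ref{l:phantom}. Taking the trace gives one scalar relation linking $\lambda$, $\nu\cdot\dvr_\M Q$ and $\bar G\nu\cdot\nu$ (here one uses $\nu\cdot(\nabla_\M Q_i)^T\nu=\nu_j(\nabla_\M Q_i)_{jk}\nu_k$, which must be bookkept carefully since it does not simplify to a divergence). Multiplying the first equation by $\nu$ on the right and on the left and adding yields an equation for the symmetric combination $\bar G\nu\otimes\nu+\nu\otimes\bar G\nu$ in terms of $B\nu$, $\nu\otimes B^T\nu$ (note these are different!), $\nu\otimes\nu$, $\lambda$ and the scalar $\bar G\nu\cdot\nu$; contracting that once more with $\nu$ on both sides isolates $\bar G\nu\cdot\nu$, and back-substitution then produces $\lambda$ and finally $\bar G$ itself as an explicit expression built from $\dvr_\M Q$, the vectors $w_i:=(\nabla_\M Q_i)^T\nu$ and $z_i:=\nabla_\M Q_i\,\nu$, and $\nu$. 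Once $\bar G$ is in hand, substituting into $f_e(\bar G\otimes\nu+\nabla_\M Q)=\tfrac12|\nabla_\M Q|^2+\tfrac12 M_3\sum_i\nabla_\M Q_i\cdot(\nabla_\M Q_i)^T+\psi(\bar G)$ and simplifying gives the stated formula; I would organize the simplification by grouping terms according to the invariants $|\dvr_\M Q|^2$, $(\nu\cdot\dvr_\M Q)^2$, $\sum_i\nu_i\nabla_\M Q_i\cdot(\nabla_\M Q_i)^T$, $\nu\cdot(\nu_i\nabla_\M Q_i\dvr_\M Q)$, $|\sum_i\nu_i(\nabla_\M Q_i+\nabla_\M Q_i^T)|^2$ and $|\sum_i\nu_i\nabla_\M Q_i^T\nu|^2$ that appear on the right-hand side of the Lemma, matching coefficients.

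The main obstacle I anticipate is purely algebraic bulk: because $M_3\neq0$ the forcing is no longer a single rank-one matrix aligned with $\nu$, so the clean cancellations that occurred in \eqref{eq:123}--\eqref{eq: G} no longer happen, and one must track several distinct vector quantities ($\dvr_\M Q$, $(\nabla_\M Q_i)^T\nu$, $\nabla_\M Q_i\nu$) and their various contractions with $\nu$ simultaneously. Keeping $B$ and $B^T$ genuinely separate throughout, and resisting the temptation to conflate $(\nabla_\M Q_i)^T\nu$ with $\nabla_\M Q_i\nu$, is where errors are most likely to creep in; this is precisely why the paper elects to state Lemma~\ref{l:phantom_g} without proof. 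Apart from this, the structure is identical to Lemma~\ref{l:phantom}: existence and uniqueness of $\bar G$ from strict convexity, solve the linear Euler--Lagrange system by successive contractions with $\nu$, substitute back, and collect terms.
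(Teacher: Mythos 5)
Your proposal follows essentially the same route as the paper's Appendix B: the paper sets $\zeta=M_2+M_3$, assembles the forcing matrix $U$ (your $B$) with columns $M_2(\dvr_\M Q_i)\nu+M_3(\nabla_\M Q_i)^T\nu$, solves the identical Lagrange-multiplier Euler--Lagrange system by successive contractions with $\nu$ to express $\bar G$ through the symmetrization $D(U)=\frac12(U+U^T)$, $U\nu\cdot\nu$ and $\tr U$, and then substitutes back and regroups into the same invariants you list. The only slip is that your displayed identity for $f_e(\bar G\otimes\nu+\nabla_\M Q)$ omits the $\frac{M_2}{2}\sum_i(\dvr_\M Q_i)^2$ term present in \eqref{eq:gosharius}, which must be retained (and combined with the correction coming from $\psi(\bar G)$) to produce the coefficient $\frac{M_2(M_3+2)}{2(M_2+M_3+2)}$ of $|\dvr_\M Q|^2$ in the final formula.
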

The outline of the proof of Lemma \ref{l:phantom_g} is given in Appendix \ref{s:f0}.}

\section{Limiting functional when $\M$ is a surface of revolution}
\label{s:revolve}
In this section we examine the special case where $\M$ is a surface revolution. We will appeal to a description of the $\Gamma$-limit $F_0$ when the surface is presented parametrically.  The relevant formulas can be found in the appendix. 
To this end, we suppose that $\M$ is specified by the map $\Psi:\mathbb{R}^2\to\mathbb R^3$ where
\begin{equation}\label{param}
\Psi(s,\theta)=\left(
\begin{array}{c}
    a_1(s)\cos{\theta}\\
    a_1(s)\sin{\theta}\\
    a_2(s)
\end{array}
\right),
\end{equation}
with $\theta\in[0,2\pi]$ and $\rr(s):=(a_1(s),a_2(s))$ a smooth curve in $\R^2$ parametrized with respect to arclength $s\in[0,L]$ for some $L>0$. Then $\rr^\prime$ is a unit vector field that we will express in terms of an angle $\phi(s)$ via $\rr^\prime(s)=(\cos{\phi(s)},\sin{\phi(s)})$. The orthonormal frame $$\left\{\T(s,\theta),\N(s,\theta),\nu(s,\theta)\right\}$$ associated with the $(s,\theta)$-parametrization of $\M$ is
\begin{eqnarray*}
& \T(s,\theta)=\left(
\begin{array}{ccc}
    \cos{\phi(s)}\cos{\theta} \\
    \cos{\phi(s)}\sin{\theta} \\
    \sin{\phi(s)}
\end{array}
\right),\qquad
\N(s,\theta)=\left(
\begin{array}{ccc}
    -\sin{\theta} \\
     \cos{\theta}\\
     0
\end{array}
\right),  & \\
& \nu(s,\theta)=\left(
\begin{array}{ccc}
    -\sin{\phi(s)}\cos{\theta} \\
    -\sin{\phi(s)}\sin{\theta} \\
    \cos{\phi(s)}
\end{array}
\right), &
\end{eqnarray*}
so that (suppressing the variables $s$ and $\theta$) we have
\[
\Psi_{,s}=\T,\quad\Psi_{,\theta}=a_1\N,\quad\Psi_{,ss}=\T_{,s}=\phi^\prime\nu,\quad\Psi_{,s\theta}=(\cos{\phi})\N,\quad\Psi_{,\theta\theta}=(a_1\sin{\phi})\nu-(a_1\cos{\phi})\T.
\]
and we also compute that
\begin{equation}
\N_{,s}=0, \quad\nu_{,s}=-\phi^\prime \T, \quad
\T_{,\theta}=(\cos{\phi})\N, \quad \N_{,\theta}=(\sin{\phi})\nu-(\cos{\phi})\T, \quad \nu_{,\theta}=-(\sin{\phi})\N.\label{eq:sr0}
\end{equation}

Then the first and the second fundamental forms for $\M$ are given by
\begin{equation}
\mathbb I=\left(
\begin{array}{cc}
\Psi_{,s}\cdot\Psi_{,s}  & \Psi_{,s}\cdot\Psi_{,\theta}  \\
\Psi_{,s}\cdot\Psi_{,\theta}  & \Psi_{,\theta}\cdot\Psi_{,s}
\end{array}
\right)
=\left(
\begin{array}{cc}
1  &  0 \\
0  &  a_1^2
  \end{array}
\right)
\end{equation}
and
\begin{equation}
\mathbb{II}=\left(
\begin{array}{cc}
\Psi_{,ss}\cdot\nu  & \Psi_{,s\theta}\cdot\nu  \\
\Psi_{,s\theta}\cdot\nu  & \Psi_{,\theta\theta}\cdot\nu
\end{array}
\right)
=\left(
\begin{array}{cc}
\phi^\prime  &  0 \\
0  &  a_1\sin{\phi}
  \end{array}
\right),
\end{equation}
respectively. It follows that $\T$ and $\N$ correspond to principal directions with the associated principal curvatures given, up to a sign, by
\begin{equation}
\kappa_T=\phi^\prime\quad\mbox{and}\quad\kappa_N=\frac{\sin{\phi}}{a_1},\label{curv}
\end{equation}
cf. \eqref{eq:j3} and \eqref{eq:j4} in the appendix.
Further, the area element of $\M$ is given by
\[dA=\sqrt{\det{\mathbb I}}\,ds\,d\theta=a_1\,ds\,d\theta\]
and the square of the magnitude of the surface gradient of a field $u$ on $\M$ can be written as
\[{|\nabla_\M u|}^2={|u_{,s}|}^2+\frac{1}{a_1^2}{|u_{,\theta}|}^2\]
in terms of the coordinates $s$ and $\theta$.

Suppose that $M_2=M_3=\alpha_1=\gamma_1=0$ so that we are in the case of equal elastic constants and all surface energy appears at leading order. Then the tensors in the admissible class $H_g$ for the energy $F_0[Q]$ satisfy
\begin{equation}
\label{eq:sr1}
Q(s,\theta)\nu(s,\theta)=\beta\nu(s,\theta)
\end{equation}
for every $(s,\theta)\in\Omega=[0,L]\times[0,2\pi]$, where
\begin{equation}
\label{eq:sr2}
F_0[Q]=\int_\Omega\left\{{|Q_{,s}|}^2+a_1(s)^{-2}{|Q_{,\theta}|}^2+\frac{1}{\delta^2}f_{LdG}(Q)\right\}a_1(s)\,ds\,d\theta.
\end{equation}

Since the admissible $Q$ satisfy \eqref{eq:sr1}, we find it preferable from this point on to use the representation \eqref{pvble} of $Q(s,\theta)$ relative to the frame $\left\{\T(s,\theta),\N(s,\theta),\nu(s,\theta)\right\}$, so that we have 
\begin{equation}
\label{eq:sr3}
  Q(s,\theta)=\left(
    \begin{array}{ccc}
      p_1(s,\theta)-\frac{\beta}{2} & p_2(s,\theta) & 0 \\
      p_2(s,\theta) & -p_1(s,\theta)-\frac{\beta}{2} & 0 \\
      0 & 0 & \beta
    \end{array}
\right).
\end{equation}
With this stipulation,
the energy is seen to depend only on the vector ${\bf{p}}=(p_1,p_2)$ and as in \eqref{QT}, $Q$ can be expressed in the form
\begin{equation}
  \label{eq:pr}
  Q=p_1(\T\otimes\T-\N\otimes\N)+p_2(\T\otimes\N+\N\otimes\T)+\frac{3\beta}{2}\left(\nu\otimes\nu-\frac{1}{3}I\right).
\end{equation}

\begin{remark}
\label{r:pdir}
We can also choose to express $Q$ in terms of its eigenframe $({\bf n},{\bf n}^\perp,\nu)$ where ${\bf n}^\perp:=\nu\times{\bf n}$,
that is
\begin{equation}
Q=\rho\left({\bf n}\otimes{\bf n} -{\bf n}^\perp \otimes {\bf n}^\perp\right)+\frac{3\beta}{2}\left(\nu\otimes\nu-\frac{1}{3}I\right),
\end{equation} 
where ${\bf n}$ is one of the nematic directors of $Q$ and $\rho-\frac{\beta}{2}$ is its eigenvalue. If we represent ${\bf n}$ in terms of its local angle with $\T$, so that
\begin{equation}
\label{eq:nrep}
{\bf n}=\cos{\psi}\T+\sin{\psi}\N\quad\mbox{and}\quad{\bf n}^\perp=-\sin{\psi}\T+\cos{\psi}\N,
\end{equation}
then
\[Q=\rho\cos{2\psi}\,(\T\otimes\T-\N\otimes\N)+\rho\sin{2\psi}\,(\T\otimes\N+\N\otimes\T)+\frac{3\beta}{2}\left(\nu\otimes\nu-\frac{1}{3}I\right).\]
Comparing this to \eqref{eq:pr}, we conclude that 
\begin{equation}
\label{palpha}
{\bf p}=\rho(\cos{2\psi},\sin{2\psi}).
\end{equation}
 Hence, the vector ${\bf p}\in\mathbb R^2$ is related to the director ${\bf n}$ in that the angle $\mathbf p$ makes with the $x$-axis is always twice that made by ${\bf n}$ with $\T$ and the magnitude of ${\bf p}$ differs from the eigenvalue of $Q$ with respect to ${\bf n}$ by $-\beta/2$.
\end{remark}

Now we let
\begin{equation}
\label{eq:sr3.5}
\begin{aligned}
Q_1&=\T\otimes\T-\N\otimes\N, & Q_2&=\T\otimes\N+\N\otimes\T,\\ Q_3&=\nu\otimes\N+\N\otimes\nu, & Q_4&=\nu\otimes\T+\T\otimes\nu.
\end{aligned}
\end{equation}
We observe that
\begin{equation}
\label{eq:sr4}
Q_i\cdot Q_j=\tr \left(Q_j^TQ_i\right)=2\delta_{ij},
\end{equation}
for $i,j=1,\ldots,4$ { with the understanding that from now on we abandon the convention that, for tensors, subscripts refer to their columns.} Using \eqref{eq:sr0}, we find
\begin{equation}
\label{eq:sr5}
\begin{aligned}
Q_{1,s}&=\phi^\prime Q_4, & Q_{1,\theta}&=2(\cos{\phi})Q_2-(\sin{\phi})Q_3, \\ Q_{2,s}&=\phi^\prime Q_3, & Q_{2,\theta}&=(\sin{\phi})Q_4-2(\cos{\phi})Q_1,\\
(\nu\otimes\nu)_{,s}&=-\phi^\prime Q_4,\qquad & (\nu\otimes\nu)_{,\theta}&=-(\sin{\phi})Q_3,
\end{aligned}
\end{equation}
so that from \eqref{eq:pr} we have
\begin{equation}
\label{eq:sr6}
Q_{,s}=p_{1,s}Q_1+p_{2,s}Q_2+p_2\phi^\prime Q_3+\left(p_1-\frac{3\beta}{2}\right)\phi^\prime Q_4
\end{equation}
and
\begin{equation}
\label{eq:sr7}
Q_{,\theta}=\left(p_{1,\theta}-2p_2\cos{\phi}\right)Q_1+\left(p_{2,\theta}+2p_1\cos{\phi}\right)Q_2  -\left(p_1+\frac{3\beta}{2}\right)(\sin{\phi})Q_3+p_2(\sin{\phi})Q_4.
\end{equation}
With the help of \eqref{eq:sr4} we conclude that
\begin{equation}
\label{eq:sr8}
\frac{1}{2}{\left|Q_{,s}\right|}^2={|\p_{,s}|}^2+\left({|\p|}^2-3\beta p_1\right){\left(\phi^\prime\right)}^2+\frac{9\beta^2}{4}{\left(\phi^\prime\right)}^2
\end{equation}
and
\begin{equation}
\label{eq:sr9}
\frac{1}{2}{\left|Q_{,\theta}\right|}^2={|\p_{,\theta}|}^2+4\cos{\phi}\left(p_1p_{2,\theta}-p_2p_{1,\theta}\right)+{|\p|}^2\left(4-3\sin^2\phi\right) +3\beta p_1\sin^2\phi+\frac{9\beta^2}{4}\sin^2{\phi},
\end{equation}
where $\p=(p_1,p_2)$. Therefore, neglecting terms that depend on $\M$ only that would lead to additive constants after integration, we have for the elastic energy density
\begin{multline}
\label{eq:sr10}
\frac{1}{2}{|\nabla_\M Q|}^2={|\p_{,s}|}^2+\frac{1}{a_1^2}{|\p_{,\theta}|}^2+\frac{4\cos\phi}{a_1^2}\left(p_1p_{2,\theta}-p_2p_{1,\theta}\right) \\ +\left(\frac{4}{a_1^2}-3\kappa_N^2+\kappa_T^2\right){|\p|}^2+3\beta p_1\left(\kappa_N^2-\kappa_T^2\right).
\end{multline}
It is also easy to check that the Landau-de Gennes potential $f_{LdG}$ is a function of the magnitude of ${\bf p}$ only., cf. for example \cite{bauman_phillips_park}.

To gain some insight into \eqref{eq:sr10}, let us assume for simplicity that $a$ is strictly positive so that $\M$ is a surface with boundary. Then let $\beta=-\frac{1}{3}$ in the expression above to model the case when all molecules in the nematic are parallel to the surface of the film, cf. \cite{Napoli_Vergori}. Further, suppose that the field $Q$ minimizes the Landau-de Gennes energy density $f_{LdG}$ everywhere on $\M$ so that, in particular, $|\p|=\mathrm{const}$ on $\M$. Then the next to last term in \eqref{eq:sr10} is purely geometric.  Therefore, neglecting this term that would lead to an additive constant after integration,
we have
\begin{equation}
\label{eq:sr11}
\frac{1}{2}{|\nabla_\M Q|}^2={|\p_{,s}|}^2+\frac{1}{a_1^2}{|\p_{,\theta}|}^2+\frac{4\cos\phi}{a_1^2}\left(p_1p_{2,\theta}-p_2p_{1,\theta}\right)+p_1\left(\kappa_T^2-\kappa_N^2\right).
\end{equation}

Following Remark \ref{r:pdir}, we can write $\p=\rho(\cos{2\psi},\sin{2\psi})$ with $\psi$ perhaps not single-valued and $\rho$ constant. Then this expression becomes
\begin{equation}
\frac{1}{2}{|\nabla_\M Q|}^2=4\rho^2\abs{\nabla_\M\psi}^2+\frac{8\rho^2\cos\phi}{a_1^2}\psi_{,\theta}+\rho\left(\kappa_T^2-\kappa_N^2\right)\cos{2\psi}.\label{QQM}
\end{equation}
We observe from this formula that contributions to the degree of $\p$ can come from both the first term on the right due to winding of $\p$ itself and from the second term related to the rotation of the frame $(\bf{T},\,\bf{N}, \nu)$. Further, the sign of $\kappa_T^2-\kappa_N^2$ in the last term on the right determines whether the director is oriented along $\T$ or $\N$. Similar conclusions from a more general differential geometric viewpoint can be found in \cite{Napoli_Vergori} and \cite{virga_talk}.

\section{Analysis of a nematic film on a frustum}
\label{s:cone}

We conclude with an example where the surface of revolution 
$\mathcal M$ is taken to be a truncated cone or frustum. This corresponds to   ${\bf r}(s)=(a_1(s),a_2(s))=(s\cos{\phi_0},s\sin{\phi_0})$ in \eqref{param}, where $s\in[s_0,s_0+L]$ for some positive $s_0$ and $L$. Since we are interested in highlighting effects due to curvature alone, we will not impose a Dirichlet condition $g$ as we had before and instead assume
natural boundary conditions on ${\bf p}$ on each orifice of the frustum.

Figure \ref{twocones} shows the results of a numerical simulation of solving the Euler-Lagrange system associated with \eqref{eq:sr2} on the frustum subject to homogeneous Neumann boundary conditions when $\frac{1}{\delta^2}$ is large. It reveals a dichotomy in the director behavior depending on {the complement, $\phi_0$, of the angle of the opening.} When $\phi_0$ is near $\pi/2$ and the cone is narrow, the vector field ${\bf n}$ follows the generators of the cone and carries no degree with respect to geodesic circles given by the upper and lower boundaries. On the other hand, when $\phi_0$ is near zero and the cone flattens to a nearly planar domain, the field ${\bf n}$ approaches a state which carries a nonzero degree with respect to geodesics along the upper and lower boundaries.

\begin{figure}

\centering
\includegraphics[height=3in]{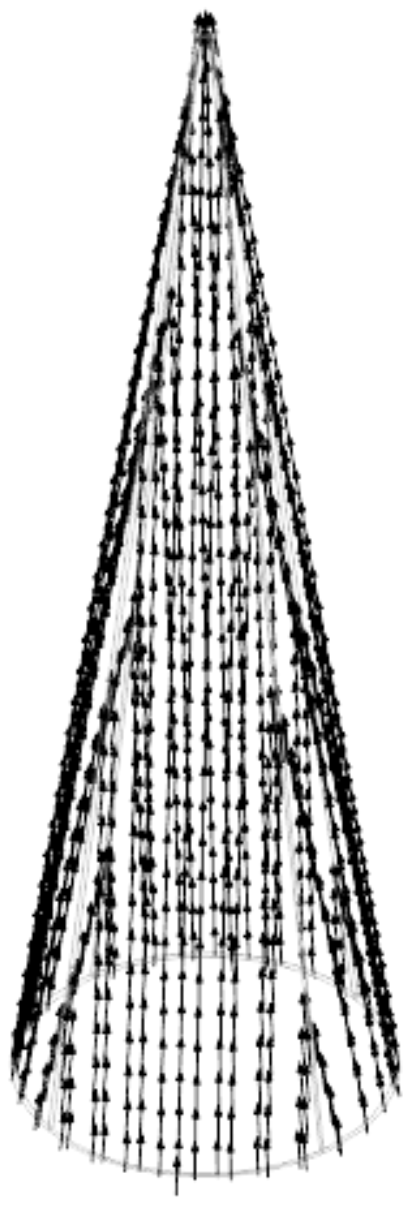}\qquad\qquad\includegraphics[height=3in]{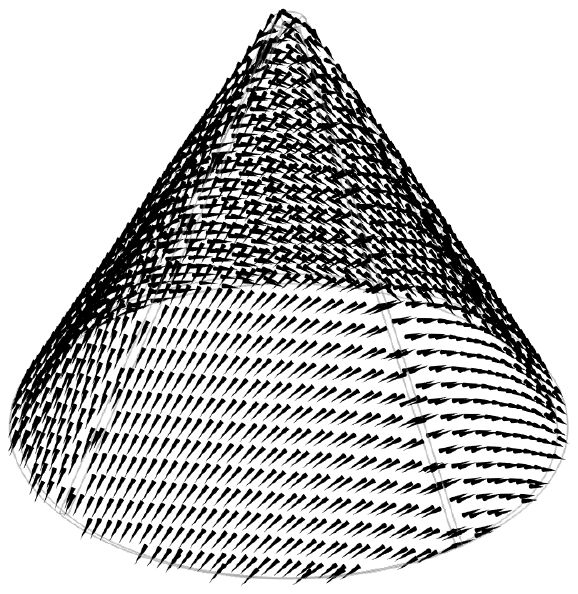}
\caption{Minimizing configurations of ${\bf n}$ for a narrow (left) and a wide (right) cones. Note that in these figures, the cones have been inverted. }
\label{twocones}
\end{figure}

 To provide some analytical basis for these numerical observations, we consider the limit $\frac{1}{\delta^2}\to \infty$ in \eqref{eq:sr2} so that we can formally assume $|{\bf p}|$ is constant so as to kill the term $f_{LdG}(Q)$; without loss of generality set $|{\bf p}|=1$. Then, referring to \eqref{palpha} we have \[{\bf p}=(\cos{2\psi(s,\theta)},\sin{2\psi(s,\theta)}).\]
 We observe from \eqref{curv} that $\kappa_T=0$ while $\kappa_N=\frac{\sin\phi_0}{a_1}$.
Thus, computing $F_0[Q]$ using \eqref{QQM} we have up to a constant that
\begin{multline*}
F_0[Q]={E_0[\psi]:=\hspace{-2mm}\int_{s_0}^{s_0+L}\hspace{-2mm}\int_0^{2\pi}\hspace{-2mm}\left[4\psi_{,s}^2+
\frac{1}{a_1^2(s)}\left(4\psi_{,\theta}^2+8\cos\phi_0\psi_{,\theta}-\sin^2{\phi_0}\cos{2\psi}\right)\right]a_1(s)d\theta\,ds}.\\
 \geq \int_{s_0}^{s_0+L}\frac{ds}{a_1(s)}\int_0^{2\pi}\left(4\psi_{,\theta}^2+8\cos\phi_0\psi_{,\theta}-\sin^2{\phi_0}\cos{2\psi}\right)d\theta \\
\geq \int_{s_0}^{s_0+L}\frac{1}{a_1(s)}\min_{\psi\in D_k}F[\psi]\,ds,
\end{multline*}
where
\[{F[\psi]:=\int_0^{2\pi}\left(4\psi_{,\theta}^2+8\cos\phi_0\psi_{,\theta}-\sin^2{\phi_0}\cos{2\psi}\right)\,d\theta}
\]
and $D_k:=\left\{\psi\in H^1([0,2\pi]):\psi(2\pi)=\psi(0)+\pi k\right\}$ for any $k\in\mathbb Z$.  Hence, the minimizer of $E_0$ for a given $k$ is independent of $s$.

Examining the expression for $E_0$ we see that any minimizer will necessarily satisfy $\psi_{,s}\equiv0,$ which leaves us to study, with a slight abuse of notation,
\begin{multline}{E_0[\psi]=\int_0^{2\pi}\left(4\psi_{,\theta}^2+8\cos\phi_0\psi_{,\theta}-\sin^2{\phi_0}\cos{2\psi}\right)\,d\theta,}\\
=8\pi \,k\cos\phi_0+\int_0^{2\pi}\left(4\psi_{,\theta}^2-\sin^2{\phi_0}\cos{2\psi}\right)\,d\theta,
\label{Dmitry}
\end{multline}
where $k$ is the winding number of the ${\bf p}$ relative to geodesic circles on the cone. Hence $k$ is twice the winding number of ${\bf n}$  along geodesic circles on the frustum, so that $\psi(2\pi)=\psi(0)+\pi k$ for some $k\in\mathbb Z$. Focusing on the last term in \eqref{Dmitry}, we observe that it corresponds to the difference in curvature squared terms in \eqref{eq:sr11}. If we only sought to optimize this term, it would force the angle $\psi$ to be zero aligning the director ${\bf n}$ with the generators $\T$ of the cone, cf. Figure \ref{twocones}(a). Setting $\psi\equiv 0$ the remaining terms in \eqref{Dmitry} would vanish, so that the total energy would be $-2\pi\sin^2{\phi_0}$. In this case ${\bf n}$ carries no degree relative to geodesic circles. 

Suppose on the other hand that $k\not= 0$, say $k=-1$. Then $\psi$ cannot be constant so this incurs some additional elastic energy given by the first term in the integrand and this positive gain competes with the negative contribution from both of the remaining terms. In Figure \ref{compare} we compare the energy of the numerically computed solution to the Euler-Lagrange O.D.E. for \eqref{Dmitry} for the case $k=-1$ to $E_0[0]$ as functions of $\phi_0$. We see that {below} some critical $\phi_0$, it is energetically preferable to have $k=-1$, in which case ${\bf n}$ does carry degree relative to geodesics. What is more, {smaller} $\phi_0$ corresponds to a gradual flattening of the frustum and convergence of minimizers to the constant state which clearly is optimal in the planar case.

Note that computationally, choosing $k$ to be any integer other than $0$ or $-1$ ends up being more expensive, cf. Figure \ref{compare}.

\begin{figure}
\centering
\includegraphics[height=2.5in]{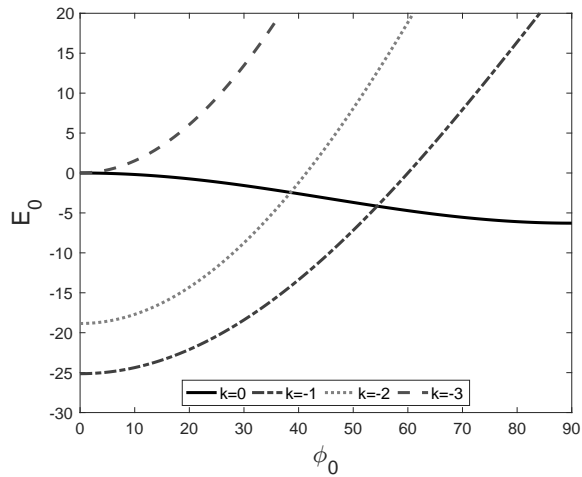}
\caption{Comparison between the energies of minimizers of $E_0$ for $k=0$, $k=-1$, $k=-2$, and $k=-3$.}
\label{compare}
\end{figure}

\section{Acknowledgements}
D.G. acknowledges support from NSF DMS-1434969. P.S. acknowledges support from NSF DMS-1101290 and NSF DMS-1362879.

\appendix

\section{Appendix: Dimension reduction for parametric surfaces}
\label{s:dimpar}
As an alternative to the approach to the dimension reduction carried out in the Section 3 here we formally outline a different argument leading to the same conclusion but using a parametric representation of the manifold $\mathcal M$. In addition to giving a different take on the limiting procedure, the parametric formulation was utilized in Sections 5 and 6.

Suppose that the geometry of the problem is as shown in Figure \ref{fig:1}. We work in non-dimensional coordinates as specified in Section \ref{secnd}. The smoothness of $\mathcal{M}$ ensures that, for a given $x_0\in\mathcal{M}$, there is an open set $U\subset\mathbb R^2$ and a smooth function $\phi:U\to\mathcal{M}$ that (a) maps $U$ homeomorphically onto an open neighborhood $V\subset\mathcal{M}$ of $x_0$ and (b) has a Jacobian matrix of rank $2$ on $U$. Since the map $\phi^{-1}:V\to U$ defines a local coordinate system on $V$, we can use the non-dimensional analog of \eqref{eq:contras} to introduce the coordinate system on $V\times\left[-\e,\e\right]$ via the smooth invertible map
\begin{equation}
\label{eq:contras1}
X=x(u)+\e t\nu(x(u)),
\end{equation}
from $U\times[-1,1]$ to $\mathbb R^3$. Note that at a given point $x(u)\in\mathcal{M}$, we have
\begin{equation}
\label{eq:j1}
X_t=\e\nu,
\end{equation}
and
\begin{equation}
\label{eq:j2}
D_uX=D_ux\left({I}+\e tA\right),
\end{equation}
where 
\begin{equation}
\label{eq:j3}
A=-\mathbb{I}^{-1}\mathbb{II},
\end{equation}
is the matrix of the shape operator and $\mathbb{I}$ and $\mathbb{II}$ are the first and second fundamental forms for $\mathcal{M}$. The shape operator $\nabla_{\mathcal{M}}\nu$ is a symmetric operator acting on the tangent space of $\mathcal{M}$ that satisfies 
\begin{equation}
\label{eq:j4}
\left(\nabla_{\mathcal{M}}\nu\right)\nu=0,\quad\left(\nabla_{\mathcal{M}}\nu\right) {\bf d}_1=\kappa_1{\bf d}_1,\quad\left(\nabla_{\mathcal{M}}\nu\right) {\bf d_2}=\kappa_2{\bf d}_2,
\end{equation}
with $\kappa_i$ and ${\bf d}_i,\ i=1,2$ being the principal curvatures and directions at $x(u)$, respectively \cite{walker2015shapes}. 

Given $X\in\Omega_\e$, let $x$ be the closest point of $\mathcal{M}$ to $X$. The gradient of a smooth vector field ${\bf a}:V\times\left[-\e,\e\right]\to\mathbb R^3$ can be decomposed into orthogonal components along and perpendicular to $\nu(x)$ by writing
\begin{equation}
\label{eq:j5}
\nabla{\bf a}=\nabla{\bf a}(\nu\otimes\nu)+\nabla{\bf a}({I}-\nu\otimes\nu).
\end{equation}
Indeed,
\begin{multline}
\label{eq:j6}
\nabla{\bf a}(\nu\otimes\nu)\cdot\nabla{\bf a}({I}-\nu\otimes\nu)=(\nu\otimes\nu)\nabla{\bf a}\cdot({I}-\nu\otimes\nu)\nabla{\bf a}\\=\tr\left\{\nabla{\bf a}({I}-\nu\otimes\nu)(\nu\otimes\nu)\nabla{\bf a}\right\}=0,
\end{multline}
so that
\begin{equation}
\label{eq:j7}
|\nabla{\bf a}|^2=\nabla{\bf a}\cdot\nabla{\bf a}={\left|\nabla{\bf a}(\nu\otimes\nu)\right|}^2+{\left|\nabla{\bf a}({I}-\nu\otimes\nu)\right|}^2.
\end{equation}
The change of variables \eqref{eq:contras1} then transforms the components of the gradient of ${\bf a}$ as follows
\begin{gather}
\nabla{\bf a}(\nu\otimes\nu)=D{\bf a}\,J^{-1}(\nu\otimes\nu)=\frac{1}{h}D{\mathbf a}\,({\mathbf e}_3\otimes\nu), \label{eq:j8}
\\ \nabla{\bf a}({I}-\nu\otimes\nu)=D{\bf a}\,J^{-1}({I}-\nu\otimes\nu)=D{\bf a}\,({I}-{\mathbf e}_3\otimes{\mathbf e}_3)J^{-1}, \label{eq:j9}
\end{gather}
where $J=\frac{\partial(X_1,X_2,X_3)}{\partial(u_1,u_2,t)}$ and $D{\mathbf a}$ is the gradient of ${\mathbf a}$ with respect to $(u_1,u_2,t)$.  Introducing the projection matrix
\begin{equation}
\label{eq:j7.5}
P_X={I}-\nu(x)\otimes\nu(x),
\end{equation}
we conclude that
\begin{gather}
\nabla{\bf a}\left({I}-P_X\right)=\frac{1}{\e}{\mathbf a}_t\otimes\nu, \label{eq:j10}
\\
\nabla{\bf a}\,P_X=D_u{\mathbf a}\,{\left({I}+\e tA\right)}^{-1}{\left(D_ux\right)}^{-1}=D_u{\mathbf a}\,\Psi(x,t;\e), \label{eq:j11}
\end{gather}
where ${\left(D_ux\right)}^{-1}$ is a left inverse of $D_ux$ and
\begin{equation}
\label{eq:psi}
\Psi(x,t;\e):={\left({I}+\e tA\right)}^{-1}{\left(D_ux\right)}^{-1}.
\end{equation}
Note that the matrix ${I}+\e tA$ is invertible when $\e$ is sufficiently small and setting $\e=0$ reduces the right hand side of \eqref{eq:j11} to
\begin{equation}
\label{eq:surfgr}
D_u{\mathbf a}{\left(D_ux\right)}^{-1}=\nabla_{\mathcal M}{\mathbf a},
\end{equation}
where $\nabla_{\mathcal M}{\mathbf a}$ is the surface gradient of ${\mathbf a}$ defined earlier in \eqref{eq:sugr}.

In non-dimensional coordinates, we can rewrite the expression for the elastic energy \eqref{newelastic} as follows
\begin{multline}
\label{elastic_bis}
f_e(\nabla Q)=\frac{1}{2}\sum_{i=1}^3\left\{{|\nabla Q_iP_X+\nabla Q_i({I}-P_X)|}^2+M_2\left(\tr{(\nabla Q_iP_X)}+\tr{(\nabla Q_i({I}-P_X))}\right)^2\right. \\ \left.+M_3(\nabla Q_iP_X+\nabla Q_i({I}-P_X))\cdot (P_X\nabla Q_i^T+({I}-P_X)\nabla Q_i^T)\right\} \\
=\frac{1}{2}\sum_{i=1}^3\left\{{\left|D_u{Q_i}\,\Psi(x,t;\e)+\frac{1}{\e}Q_{i,t}\otimes\nu\right|}^2+M_2{\left(D_u{Q_i}\cdot\Psi(x,t;\e)^T+\frac{1}{\e}Q_{i,t}\cdot\nu\right)}^2\right.\\ +\left.M_3\left(D_u{Q_i}\,\Psi(x,t;\e)+\frac{1}{\e}Q_{i,t}\otimes\nu\right)\cdot\left(\Psi(x,t;\e)^TD_u{Q_i}^T+\frac{1}{\e}\nu\otimes Q_{i,t}\right)\right\} \\
=\frac{1}{2}\sum_{i=1}^3\left\{{\left|D_u{Q_i}\,{\left(D_ux\right)}^{-1}+\frac{1}{\e}Q_{i,t}\otimes\nu\right|}^2+M_2{\left(D_u{Q_i}\cdot{\left(D_ux\right)}^{-T}+\frac{1}{\e}Q_{i,t}\cdot\nu\right)}^2\right.\\ +\left.M_3\left(D_u{Q_i}\,{\left(D_ux\right)}^{-1}+\frac{1}{\e}Q_{i,t}\otimes\nu\right)\cdot\left({\left(D_ux\right)}^{-T}D_u{Q_i}^T+\frac{1}{\e}\nu\otimes Q_{i,t}\right)\right\} +O(\e),
\end{multline}
when $\e$ is small. The same arguments that led to the proof of Threorem \ref{t1} demonstrate that the limiting elastic energy density is given by \eqref{eq:gosharius}, that is
\begin{multline}
\label{eq:gospar}
f_{e}^0\left(\nabla_{\mathcal M}Q,\nu\right)=\frac{1}{2}\min_{ G\in\mathcal A}\left[\sum_{i=1}^3\left\{{\left|\nabla_{\mathcal M}Q_i+ G_i\otimes\nu\right|}^2+M_2{\left(\mathrm{div}_{\mathcal M}Q_i+ G_i\cdot\nu\right)}^2\right.\right. \\ +\left.\left.M_3\left(\nabla_{\mathcal M}Q_i+ G_i\otimes\nu\right)\cdot\left({\left(\nabla_{\mathcal M}Q_i\right)}^T+\nu\otimes  G_i\right)\right\}\right],
\end{multline}
where $\nabla_{\mathcal M}Q_i=D_u{Q_i}\,{\left(D_ux\right)}^{-1}$ and $\mathrm{div}_{\mathcal M}Q_i=\tr\nabla_{\mathcal M}Q_i=D_u{Q_i}\cdot{\left(D_ux\right)}^{-T}$, respectively, for $i=1,\ldots,3$.

{
\section{Appendix: Outline of the proof of Lemma \ref{l:phantom_g}}
\label{s:f0}
In order find the expression for $f_{e}^0\left(\nabla_{\mathcal M}Q,\nu\right)$ recall that in \eqref{eq:gosharius} we need to minimize 
\begin{equation}
\label{eq:gosha}
\phi[G]:=\sum_{i=1}^3\left\{\left(M_2\left(\mathrm{div}_{\mathcal M}Q_i\right)\nu+M_3{\left(\nabla_{\mathcal M}Q_i\right)}^T\nu\right)\cdot  G_i+\frac{1}{2}{\left| G_i\right|}^2+\frac{1}{2}(M_2+M_3){( G_i\cdot\nu)}^2\right\}.
\end{equation}
among all $G\in\mathcal A$. To this end, set $\zeta=M_2+M_3$ and let the columns of the matrix $U\in M^{3\times3}$ be given by 
\[
U_i=M_2\left(\mathrm{div}_{\mathcal M}Q_i\right)\nu+M_3{\left(\nabla_{\mathcal M}Q_i\right)}^T\nu,
\]
where $i=1,\ldots,3$. The equation \eqref{eq:gosha} can now be written as
\begin{equation}
\label{eq:gosha1}
\phi[G]=U\cdot G+\frac{1}{2}{\left| G\right|}^2+\frac{\zeta}{2}{|G\nu|}^2.
\end{equation}
Using the same procedure as in Lemma \ref{l:phantom}, we obtain that
\begin{multline}
\label{eq:eq}
\bar G=-D(U)+\frac{\zeta}{\zeta+2}\left(\nu\otimes D(U)\nu+D(U)\nu\otimes\nu\right)\\ -\frac{\zeta\left(\zeta U\nu\cdot\nu+(\zeta+2)\,\mathrm{tr}\,U\right)}{(\zeta+2)(2\zeta+3)}\nu\otimes\nu-\frac{\zeta U\nu\cdot\nu-(\zeta+1)\,\mathrm{tr}\,U}{2\zeta+3}I
\end{multline}
minimizes \eqref{eq:gosha1}, where 
\[
D(U)=\frac{1}{2}\left(U+U^T\right).
\]
Next, substituting $\bar G$ into \eqref{eq:eq} and following a lengthy sequence of trivial calculations, the minimum value of $\phi$ is given by
\begin{multline}
\label{eq:eq1}
\phi[\bar G]=-\frac{1}{2}{|D(U)|}^2+\frac{\zeta}{\zeta+2}{|D(U)\nu|}^2-\frac{\zeta^2}{2(\zeta+2)(2\zeta+3)}{(U\nu\cdot\nu)}^2 \\ -\frac{\zeta}{2\zeta+3}(U\nu\cdot\nu)\,\tr(U)+\frac{\zeta+1}{2(2\zeta+3)}\,\tr^2(U).
\end{multline}
The conclusion of Lemma \ref{l:phantom_g} then follows from \eqref{eq:eq1} with the help of the identities 
\begin{gather*}
D(U)\nu=\frac{M_2}{2}\left(\mathrm{div}_{\mathcal M}Q+\right(\mathrm{div}_{\mathcal M}Q\cdot\nu\left)\nu\right)+\frac{M_3}{2}\sum_{i=1}^3\nu_i\nabla_{\mathcal M}Q_i^T\nu, \\
U\nu\cdot\nu=M_2\,\nu\cdot\mathrm{div}_{\mathcal M}Q, \\
\tr(U)=(M_2+M_3)\,\nu\cdot\mathrm{div}_{\mathcal M}Q.
\end{gather*}
}

\bibliographystyle{ieeetr}
\bibliography{tensor-nematic} 
\end{document}